\newtheorem{theorem}{Theorem}[section]
\newtheorem{lm}[theorem]{Lemma}
\newtheorem{tr}[theorem]{Theorem}
\newtheorem{cor}[theorem]{Corollary}
\newtheorem{rem}[theorem]{Remark}
\newtheorem{pr}[theorem]{Proposition}
\newtheorem{example}[theorem]{Example}
\begin{document}
	
\title{Orbits of actions of group superschemes}
\author[Bovdi]{V.A.~Bovdi}
\address{Department of Mathematical Sciences, UAEU, Al-Ain, United Arab Emirates}
\email{vbovdi@gmail.com}
\author[Zubkov]{A.N.~Zubkov}
\address{Department of Mathematical Sciences, UAEU, Al-Ain, United Arab Emirates; \linebreak Sobolev Institute of Mathematics, Omsk Branch, Pevtzova 13, 644043 Omsk, Russia}
\email{a.zubkov@yahoo.com}

\thanks{}

\begin{abstract}
Working over an algebraically closed field $\Bbbk$, we prove that all orbits of a left action of an algebraic group superscheme $G$ on a superscheme $X$ of finite type are locally closed. Moreover, such an orbit $Gx$, where $x$ is a $\Bbbk$-point of $X$, is closed if and only
if $G_{ev}x$ is closed in $X_{ev}$, or equivalently, if and only if $G_{res}x$ is closed in $X_{res}$. Here $G_{ev}$ is the largest purely even group super-subscheme of $G$ and $G_{res}$ is $G_{ev}$ regarded as a group scheme. Similarly, $X_{ev}$ is the largest purely even super-subscheme of $X$ and $X_{res}$ is $X_{ev}$ regarded as a scheme. We also prove that $\mathrm{sdim}(Gx)=\mathrm{sdim}(G)-\mathrm{sdim}(G_x)$, where $G_x$ is the stabilizer of $x$.
\end{abstract}

\maketitle
	
\section*{Introduction}

One of the fundamental properties of the action of a (not necessary affine) algebraic group scheme $G$ on a scheme $X$ of finite type is the existence  of closed orbits (see \cite[Proposition 7.6]{milne}, or \cite[II, \S 5, Proposition 3.2]{dg}).
In these notes we show that the similar property takes place for the action of a (not necessary affine) algebraic group superscheme $G$ on a superscheme $X$ of finite type.

Contrary to the purely even case we can not use directly the standard argument on orbits of minimal dimension. The difference is
that for a closed super-subscheme $Z$ of $X$, even if its underlying topological space $Z^e$ is $G(\Bbbk)$-stable, $Z$ is not necessary $G$-stable. It is clear why, $G(\Bbbk)$ is a group of $\Bbbk$-points of the largest purely even super-subgroup $G_{ev}$ of $G$. In general, $G$ is a product of $G_{ev}$ and some normal group subfunctor $N(G)$ (in terms of \cite{maszub3}, the latter is called the \emph{formal neighborhood} of the identity in $G$), and the action of $N(G)$ on $X$ is not controlled by $G(\Bbbk)$ at all.

To overcome this difficulty, we first prove that for any point $x\in X(\Bbbk)$ the superscheme morphism $\overline{a}_x:  G/G_x\to X$, induced
by the orbit morphism $a_x:  G\to X$, $g\mapsto gx$, is an immersion. In other words, the orbit $Gx$ is always a \emph{locally closed} super-subscheme of $X$. Therefore, $Gx$ is closed if and only if $(Gx)^e=(G_{ev}x)^e$ is.

The above two principal results are based on several auxiliary results that are interesting on their own. First, we introduce the  \emph{super-dimension} of certain Noetherian superschemes. This generalizes the previous definition from \cite{maszub}. We use the same notation $\mathrm{sdim}(X)$ as in \cite{maszub}. Then we prove that the super-dimension of a sheaf quotient $G/H$, where $G$ is an algebraic group superscheme and $H$ is its (closed) group super-subscheme, is equal to $\mathrm{sdim}(G)-\mathrm{sdim}(H)$.
The proof of this result is made possible by recent progress in the study of sheaf quotients of algebraic group superschemes (cf. \cite[Theorem 14.1]{maszub3}).

Recall that with any superscheme $X$ one can associate a \emph{graded} superscheme $\mathsf{gr}(X)$ as follows. The underlying topological space of $\mathsf{gr}(X)$ coincides with $X^e$ and the superalgebra sheaf of $\mathsf{gr}(X)$ is isomorphic to the sheafification of the presheaf
\[U\mapsto \oplus_{n\geq 0}\mathcal{I}_X(U)^n/\mathcal{I}_X(U)^{n+1}, \]
where $U$ runs over open subsets of $X^e$ and $\mathcal{I}_X$ is the superideal sheaf generated by $(\mathcal{O}_X)_1$. If $G$ is a (locally) algebraic group superscheme, then $\mathsf{gr}(G)$ has the natural structure of a (locally) algebraic group superscheme as well. Moreover, $G\to\mathsf{gr}(G)$ is an endofunctor of both categories of locally algebraic and algebraic group superschemes. We show that if $G$ is algebraic and $H$ is a closed group super-subscheme of $G$, then $\mathsf{gr}(G/H)\simeq \mathsf{gr}(G)/\mathsf{gr}(H)$. Besides, $(G/H)_{ev}\simeq G_{ev}/H_{ev}$.
These results  extend \cite[Proposition 4.18]{mastak} and \cite[Corollary 6.23]{sher}.

The article is organized as follows. In the first section we recall some elementary properties of super-commutative superalgebras. In the second section we recall the notion of \emph{Krull super-dimension} of a Noetherian superalgebra, whose even component is a Noetherian algebra of finite Krull dimension. The definition of Krull super-dimension reflects the notion of \emph{longest system of odd parameters}, which is more general and less restrictive than the long known notion of \emph{odd regular sequence} (see \cite{maszub, schmitt}).

The third section contains all the necessary facts about superschemes and group superschemes. In the fourth section we recall the definition of the category of Harish-Chandra pairs. It has been recently proven that this category is naturally equivalent to the category of locally algebraic group superschemes (see \cite[Theorem 12.10]{maszub3}, we also refer the reader to \cite{gav, masshib} for a better understanding of the development of this concept).

In the fifth section we recall the definition of left/right action of a group superscheme on a superscheme. For a given $\Bbbk$-point $x$ we also define the orbit morphism $a_x:  G\to X$ and the induced morphism $\overline{a}_x$.
The sixth section is devoted to the proving of the above mentioned isomorphisms $\mathsf{gr}(G/H)\simeq \mathsf{gr}(G)/\mathsf{gr}(H)$ and $(G/H)_{ev}\simeq G_{ev}/H_{ev}$. We also characterize locally algebraic group superschemes which isomorphic to a graded group superscheme. The content of seventh section is about the notion of super-dimension of certain Noetherian superschemes. In the eighth section we prove an analog of the well known formula for the dimension of a sheaf quotient $G/H$ in the category of algebraic group superschemes. In ninth section we prove the main Theorem \ref{orbit map is an immersion}. We conclude with some elementary example of an action of the odd unipotent group superscheme $G_a^-$ with respect to which all orbits are closed.

\section{Superalgebras}

Throughout this article $\Bbbk$ is a field of odd or zero characteristic. A $\mathbb{Z}_2$-graded $\Bbbk$-algebra $A$ is said to be a  \emph{superalgebra}. The homogeneous components of $A$ are denoted by $A_0$ and $A_1$. The elements of $A_0$ are called \emph{even} and the elements of $A_1$ are called \emph{odd}. We have the \emph{parity function} $(A_0\sqcup A_1)\setminus 0\to \mathbb{Z}_2, a\to |a|$, which maps $A_0\setminus 0$ to $0$ and $A_1\setminus 0$ to $1$, correspondingly.

A superalgebra $A$ is called \emph{super-commutative}, provided $ab=(-1)^{|a||b|}ba$ for any couple of homogeneous elements $a$ and $b$.  Throughout this article all superalgebras are supposed to be super-commutative, unless stated otherwise. The category of superalgebras (with graded morphisms) is denoted by $\mathsf{SAlg}_{\Bbbk}$.

Let $A$ be a superalgebra. Then a prime (maximal) superideal of $A$ has a form $\mathfrak{P}=\mathfrak{p}\oplus A_1$, where $\mathfrak{p}$ is a prime (respectively, maximal) ideal of $A_0$. In particular, $A$ is a \emph{local} superalgebra if and only if $A_0$ is a local algebra. A \emph{localization} $A_{\mathfrak{P}}$ is defined as $(A_0\setminus\mathfrak{p})^{-1}A$. It is clear that $A_{\mathfrak{P}}$ is a local superalgebra. If $A$ and $B$ are local superalgebras with maximal superideals $\mathfrak{P}$ and $\mathfrak{Q}$, then a superalgebra morphism
$\phi:  A\to B$ is said to be \emph{local} if $\phi(\mathfrak{P})\subseteq\mathfrak{Q}$.

Let $\Bbbk [x_1, \ldots, x_m\mid y_1, \ldots, y_n]$ denote a \emph{polynomial superalgebra}, freely generated by even and odd indeterminants
$x_1, \ldots, x_m$ and $y_1, \ldots, y_n$.

A superalgebra $A$ is called \emph{graded}, provided $A$ is $\mathbb{N}$-graded, say $A=\oplus_{n\geq 0} A(n)$, and
$A_0=\oplus_{n\geq 0} A(2n)$,  $A_1=\oplus_{n\geq 0} A(2n+1)$. We can associate with arbitrary superalgebra $A$ a graded superalgebra
$\mathsf{gr}(A)=\oplus_{n\geq 0} I^n_A/I^{n+1}_A$, where $I_A=AA_1$. The $0$-th component of $\mathsf{gr}(A)$ is denoted by
$\overline{A}$. We also say that a graded superalgebra $A$ is \emph{Grassman graded}, provided $A(n)=A(1)^n$ for all $n\geq 1$.
It is obvious that $A$ is Grassman graded if and only if $A\simeq\mathsf{gr}(A)$.
\begin{rem}\label{graded}
It is clear that $A\to\mathsf{gr}(A)$ is an endofunctor of the catgeory $\mathsf{SAlg}_{\Bbbk}$. Moreover, if $f:  A\to B$
is a morphism of Grassman graded superalgebras, i.e. $f(A(n))\subseteq B(n)$ for each $n\geq 0$, then we have a commutative diagram
\[\begin{array}{ccc}
\mathsf{gr}(A) & \simeq & A \\
\downarrow & & \downarrow \\
\mathsf{gr}(B) & \simeq & B
\end{array}  \]
whose vertical arrows are $\mathsf{gr}(f)$ and $f$ respectively.
\end{rem}

\section{Krull super-dimension}

Let $A$ be a \emph{Noetherian} superalgebra. In other words, $A_0$ is a Noetherian algebra and $A_1$ is a finitely generated $A_0$-module (cf. \cite[Lemma 1.4]{maszub}). We also assume that the Krull dimension $\mathrm{Kdim}(A_0)$ of $A_0$ is finite. Set $\mathrm{Kdim}(A_0)=n$.

A collection of odd elements $y_1, \ldots, y_k\in A_1$ is said to be a \emph{system of odd parameters} of $A$, provided there is
a longest prime chain $\mathfrak{p}_0\subseteq \cdots \subseteq\mathfrak{p}_n$ in $A_0$ such that $\mathrm{Ann}_{A_0}(y_1\cdots y_k)\subseteq
\mathfrak{p}_0$, or equivalently, $\mathrm{Kdim}(A_0/\mathrm{Ann}_{A_0}(y_1\cdots y_k))=\mathrm{Kdim}(A_0)$. The \emph{odd Krull dimension} of $A$ is defined as the length of a longest system of odd parameters of $A$ (see \cite[Section 4]{maszub}). It is denoted by $\mathrm{Ksdim}_1(A)$, as well as $\mathrm{Kdim}(A_0)$ is denoted by $\mathrm{Ksdim}_0(A)$ and called the \emph{even Krull dimension} of $A$. The couple of nonnegative integers $\mathrm{Ksdim}_0(A)\mid\mathrm{Ksdim}_1(A)$ is called just \emph{Krull super-dimension} of $A$ and denoted by $\mathrm{Ksdim}(A)$.

There is a more  restrictive notion of an \emph{odd regular sequence}. More precisely, the odd elements $y_1, \ldots, y_k\in A_1$ form an \emph{odd regular sequence}, provided 
$\mathrm{Ann}_A(y_1\cdots y_k)=Ay_1 +\cdots +Ay_k$ (see \cite[Corollary 3.1.2]{schmitt}). It is clear that any odd regular sequence is a system of odd parameters, but it is not necessary a part of some longest system of odd parameters (see \cite[Section 3]{zubkol}).

Let $A$ be local superalgebra with maximal superideal $\mathfrak{M}$. $A$ is said to be \emph{oddly regular} if one of the following equivalent conditions hold (\cite[Corollary 3.3]{schmitt}): 
\begin{enumerate}
\item[(i)] $I_A$ is generated by an odd regular sequence;
\item[(ii)] Every minimal base of $I_A$ is an regular sequence.
\end{enumerate}
Following \cite{schmitt} we denote $A_1/\mathfrak{m}A_1$ by $\Phi_A$.
\begin{lm}\label{oddly regular}
Let $\Bbbk (A)$ denote the residue field $A/\mathfrak{M}$. If $A$ is oddly regular, then $\mathrm{Ksdim}_1(A)=\dim_{\Bbbk (A)}(\Phi_A)$.	
\end{lm}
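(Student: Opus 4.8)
The plan is to prove the two inequalities $\mathrm{Ksdim}_1(A)\le \dim_{\Bbbk(A)}(\Phi_A)$ and $\mathrm{Ksdim}_1(A)\ge \dim_{\Bbbk(A)}(\Phi_A)$ separately, and only the second will use oddly regularity. Write $d=\dim_{\Bbbk(A)}(\Phi_A)$. Since $A_0$ is local with maximal ideal $\mathfrak{m}$ and $A_1$ is a finitely generated $A_0$-module, Nakayama's lemma identifies $d$ with the minimal number of generators of $A_1$ over $A_0$. Lifting a $\Bbbk(A)$-basis of $\Phi_A=A_1/\mathfrak{m}A_1$ to odd elements $y_1,\dots,y_d\in A_1$ then gives a minimal generating set, so $A_1=A_0y_1+\cdots+A_0y_d$; and since $I_A=AA_1$, this same tuple is precisely a minimal base of the superideal $I_A$.

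For the upper bound I would show that $\mathrm{Ksdim}_1(A)\le d$ in fact holds unconditionally, with no regularity assumption. Let $z_1,\dots,z_k\in A_1$ with $k>d$. Expressing each $z_j$ as an $A_0$-combination of $y_1,\dots,y_d$ and expanding $z_1\cdots z_k$, every resulting monomial has the shape $(\text{even scalar})\,y_{i_1}\cdots y_{i_k}$. Because $\mathrm{char}(\Bbbk)\neq 2$ we have $y_i^2=0$ and $y_iy_j=-y_jy_i$, so any monomial vanishes as soon as two of its indices agree; since $k>d$ this always happens, whence $z_1\cdots z_k=0$. Then $\mathrm{Ann}_{A_0}(z_1\cdots z_k)=A_0$ is contained in no prime of $A_0$, so $z_1,\dots,z_k$ cannot be a system of odd parameters. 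Hence no system of odd parameters has length exceeding $d$.

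For the lower bound I would invoke oddly regularity: by condition (ii) the minimal base $y_1,\dots,y_d$ of $I_A$ is an odd regular sequence, and we already know any odd regular sequence is a system of odd parameters, so $\mathrm{Ksdim}_1(A)\ge d$. Combining the two bounds gives $\mathrm{Ksdim}_1(A)=d=\dim_{\Bbbk(A)}(\Phi_A)$.

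The delicate point — and the only place regularity is really needed — sits inside that last step, namely that $y_1,\dots,y_d$ realizes the full even dimension, i.e. $\mathrm{Kdim}\big(A_0/\mathrm{Ann}_{A_0}(y_1\cdots y_d)\big)=\mathrm{Kdim}(A_0)$. This is exactly what oddly regularity secures: it forces $\mathrm{Ann}_A(y_1\cdots y_d)=Ay_1+\cdots+Ay_d=I_A$, so $\mathrm{Ann}_{A_0}(y_1\cdots y_d)=I_A\cap A_0=A_1^2$; and since $A_1^{d+1}=0$ makes $I_A$ a nilpotent ideal, this annihilator lies in the nilradical of $A_0$, hence in every minimal prime, forcing $\mathrm{Kdim}(A_0/A_1^2)=\mathrm{Kdim}(A_0)$. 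Without oddly regularity the product $y_1\cdots y_d$ could acquire extra annihilators that spoil the dimension count, so this is the genuine obstacle; the rest is bookkeeping with Nakayama's lemma and the anticommutation relations.
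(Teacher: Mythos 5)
Your proof is correct, and its skeleton is the same as the paper's: identify $\dim_{\Bbbk(A)}(\Phi_A)$ with the minimal number of odd generators of $A_1$ (equivalently, of $I_A$) via Nakayama, establish the upper bound $\mathrm{Ksdim}_1(A)\le\dim_{\Bbbk(A)}(\Phi_A)$ unconditionally, and use oddly regularity only to turn a minimal base of $I_A$ into a system of odd parameters of that length. The difference is that you prove inline the two ingredients the paper obtains by reference. For the upper bound the paper simply cites \cite[Lemma 5.1]{maszub}, whereas you reprove it by pigeonhole: $k>d$ odd factors drawn from $d$ anticommuting square-zero generators force the product to vanish (using $\mathrm{char}(\Bbbk)\neq 2$ for $y_i^2=0$), so the annihilator is all of $A_0$ and lies in no prime. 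For the lower bound the paper leans on its earlier remark that any odd regular sequence is a system of odd parameters, whereas you verify this concretely for the minimal base: regularity gives $\mathrm{Ann}_{A_0}(y_1\cdots y_d)=I_A\cap A_0=A_1^2$, which is a nilpotent ideal of $A_0$, hence contained in every prime, so $\mathrm{Kdim}\bigl(A_0/\mathrm{Ann}_{A_0}(y_1\cdots y_d)\bigr)=\mathrm{Kdim}(A_0)$. Both of your computations are sound. What the paper's version buys is brevity; what yours buys is self-containedness and an explicit record of exactly where the hypotheses (odd characteristic, oddly regularity) enter the argument.
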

\begin{proof}
Note that $\dim_{\Bbbk (A)}(\Phi_A)$ equals the minimal number of generators of $A_0$-module $A_1$, and the minimal number of (odd) generators of $I_A$ as well. On the other hand, $\mathrm{Ksdim}_1(A)\leq \dim_{\Bbbk (A)}(\Phi_A)$ by \cite[Lemma 5.1]{maszub}. Thus if $A$ is oddly regular, then any minimal base of $I_A$ is the longest system of odd parameters.	
\end{proof}

\section{Superschemes}

Recall that a \emph{geometric superspace} $X$ consists
of a topological space $X^e$ and a sheaf of super-commutative superalgebras $\mathcal{O}_X$ on $X^e$, 
such that all stalks $\mathcal{O}_{X,x}$ for $x\in X^e$ are local superalgebras, whose maximal ideals are denoted by $\mathfrak{m}_x$. Define a \emph{residue field} at a point $x$ as $\kappa(x)=\mathcal{O}_{X, x}/\mathfrak{m}_x$.

A morphism of
superspaces $f:  X \to Y$ is a pair $(f^e, f^*)$, where $f^e:  X^e \to Y^e$ is a morphism
of topological spaces and $f^*:  \mathcal{O}_Y \to f^e
_* \mathcal{O}_X$ is a morphism of sheaves such that
$f^*_x:  \mathcal{O}_{Y,f^e(x)} \to \mathcal{O}_{X,x}$ is a local morphism for any $x\in X^e$. For any couple of open subsetes
$U\subseteq V\subseteq X^e$ let $\mathrm{res}_{V, U}$ denote the corresponding superalgebra morphism $\mathcal{O}_X(V)\to \mathcal{O}_X(U)$.

Let $R$ be a superalgebra. An \emph{affine geometric superscheme} $\mathrm{SSpec}(R)$ can be defined as
follows. The underlying topological space of $\mathrm{SSpec}(R)$ coincides with the prime
spectrum of $R_0$, endowed with the Zariski topology. For any open subset $U\subseteq (\mathrm{SSpec}(R))^e$, the super-ring $\mathcal{O}_{\mathrm{SSpec}(R)}(U )$ consists of all locally constant functions
$h:  U \to \sqcup_{\mathfrak{P}\in U} R_{\mathfrak{P}}$ such that $h(\mathfrak{P})\in R_{\mathfrak{P}}$ and  $\mathfrak{P}\in U$.

A superspace $X$ is called a \emph{(geometric) superscheme} if there is an open covering
$X^e = \cup_{i\in I}U_i$, such that each open super-subspace $(U_i, \mathcal{O}_X|_{U_i})$ is isomorphic to an affine
superscheme $\mathrm{SSpec}(R_i)$. Superschemes form a full subcategory of the category of geometric superspaces, denoted by
$\mathcal{SV}$.

A superscheme $X$ is said to be \emph{locally of finite type} over $\Bbbk$, if it can be covered by open affine super-subschemes $\mathrm{SSpec}(R_i)$ with each superalgebra $R_i$ to be finitely generated. If additionally this covering is finite, then $X$ is said to be
of \emph{finite type} (over $\Bbbk$).  Finally, a superscheme $X$ is said to be \emph{Noetherian}, if it can be covered by finitely many open affine super-subschemes $\mathrm{SSpec}(R_i)$, where each $R_i$ is a Noetherian superalgebra.

With any superscheme $X$ we can associate a \emph{largest purely even} closed super-subscheme $X_{ev}=(X^e, \mathcal{O}_X/\mathcal{I}_X)$ and a
\emph{graded} superscheme $\mathsf{gr}(X)=(X^e, \mathsf{gr}(\mathcal{O}_X))$, where
$\mathsf{gr}(\mathcal{O}_X)$ is a sheafification of the presheaf
\[
U\mapsto \oplus_{n\geq 0}\mathcal{I}_X(U)^n/\mathcal{I}_X(U)^{n+1},\qquad  U \subseteq X^e,
\]
and $\mathcal{I}_X$ is a superideal sheaf on $X$ generated by $(\mathcal{O}_X)_1$. In more detail, $\mathsf{gr}(\mathcal{O}_X)$ is a subsheaf of the (superalgebra) sheaf $\prod_{n\geq 0}\mathcal{I}_X^n/\mathcal{I}_X^{n+1}$ such that a section $s\in \prod_{n\geq 0}(\mathcal{I}_X^n/\mathcal{I}_X^{n+1})(U)$ belongs to $\mathsf{gr}(\mathcal{O}_X)(U)$ if and only if there is a covering of $U$ by open super-subschemes $U_i$ with $\mathrm{res}_{U, U_i}(s)\in \oplus_{n\geq 0}(\mathcal{I}_X^n/\mathcal{I}_X^{n+1})(U_i)$ for each index $i$ (cf. \cite[Section 10]{maszub3}). It is easy to see that $\mathsf{gr}(\mathrm{SSpec}(A))\simeq\mathrm{SSpec}(\mathsf{gr}(A))$.

The superscheme $X_{ev}$ can be regarded as an ordinary scheme, in which case it is denoted by $X_{res}$. Also, one can associate with $X$ a scheme $X_0=(X^e, (\mathcal{O}_X)_0)$.
So, we have two endofunctors $X\to \mathsf{gr}(X)$ and $X\to X_{ev}$ of the category $\mathcal{SV}$, and two functors $X\to X_{res}$ and $X\to X_0$ from $\mathcal{SV}$ to the category of schemes. For any superscheme morphism $f:  X\to Y$, the induced (super)scheme morphisms
$X_{ev}\to Y_{ev}$, $\mathsf{gr}(X)\to\mathsf{gr}(Y)$, $X_{res}\to Y_{res}$ and $X_0\to Y_0$ are denoted by
$f_{ev}$, $\mathsf{gr}(f)$, $f_{res}$ and $f_0$, respectively.

Recall that a superscheme morphism $f:  X\to Y$ is called an \emph{immersion} if there is an open super-subscheme $U$ of $Y$ such that
$f^e(X^e)\subseteq U^e$ and $f$ induces an isomorphism of $X$ onto a closed super-subscheme of $U$. This superscheme is said to be a \emph{locally closed} super-subscheme of $X$.

Equivalently, $f$ is an immersion if and only if $f^e$ is an homeomorphism of $X^e$ onto a closed subset of $U^e$ and the induced morphism of sheaves $f^*:  \mathcal{O}_Y|_U\to f_*\mathcal{O}_X$ is surjective (cf. \cite[Section 1]{maszub3}). The surjectivity of the sheaf morphism $f^*$ is equivalent to the condition that for any $x\in X^e$ the local superalgebra morphism $\mathcal{O}_{Y, f^e(x)}\to \mathcal{O}_{X, x}$ is surjective.

Let $\mathcal{F}$ denote the category of $\Bbbk$-functors from $\mathsf{SAlg}_{\Bbbk}$ to $\mathsf{Sets}$. There is a natural functor
from $\mathcal{SV}$ to $\mathcal{F}$ defined as
\[
X\to \mathbb{X}, \quad \mbox{where} \ \mathbb{X}(A)=\mathrm{Mor}_{\mathcal{SV}}(\mathrm{SSpec}(A), X),\qquad  A\in\mathsf{SAlg}_{\Bbbk}. 
\]
The $\Bbbk$-functor $\mathbb{X}$ is called a \emph{functor of points} of $X$.
For example, if $X\simeq \mathrm{SSpec}(R)$, then $\mathbb{X}\simeq \mathrm{SSp}(R)$, where
\[ 
\mathrm{SSp}(R)(A)=\mathrm{Mor}_{\mathsf{SAlg}_{\Bbbk}}(R, A),\qquad  A\in\mathsf{SAlg}_{\Bbbk}. 
\]
The $\Bbbk$-functor $\mathrm{SSp}(R)$ is called an \emph{affine superscheme}.

The functor $X\to\mathbb{X}$ is an equivalence between the category $\mathcal{SV}$ and a full subcategory of $\mathcal{F}$. The latter subcategory consists of all \emph{local} $\Bbbk$-functors covered by \emph{open subfunctors} isomorphic to affine superschemes (for more details see \cite[Theorem 10.3.7]{cf}, or \cite[Theorem 5.14]{maszub2}). It is is denoted by $\mathcal{SF}$ and the objects of $\mathcal{SF}$ are called \emph{just superschemes}.

In what follows we use the term \emph{superscheme} for both geometric superschemes and just superschemes, but we distinguish them by notations, i.e. geometric superschemes and their morphisms are denoted by $X, Y,\ldots,$ and $f, g, \ldots,$ and just superschemes and their morphisms (\emph{natural transformations} of $\Bbbk$-functors) are denoted by $\mathbb{X}, \mathbb{Y}, \ldots,$ and ${\bf f}, {\bf g}, \ldots,$ respectively. Similarly, a superscheme $\mathbb{X}$ is of (locally) finite type
over $\Bbbk$ or Noetherian if and only if its geometric counterpart $X$ is. Throughout this article what is proven for geometric superschemes can be naturally translated to the category of superschemes and vice versa.

Note that $f:  X\to Y$ is an open (closed) immersion if and only if ${\bf f}:  \mathbb{X}\to \mathbb{Y}$ is an open (closed) embedding (see \cite[Proposition 5.12]{maszub2} and \cite[Proposition 1.6]{maszub3}). Therefore, $f:  X\to Y$ is an immersion if and only ${\bf f}:  \mathbb{X}\to\mathbb{Y}$ is a natural isomorphism of $\mathbb{X}$ onto a closed subfunctor of an open subfunctor of $\mathbb{Y}$. As above, we call $\bf f$ an \emph{immersion} and $\mathbb{X}$ a \emph{locally closed} super-subscheme of $\mathbb{Y}$.

For example, the closed immersion $X_{ev}\to X$ corresponds
to the closed embedding $\mathbb{X}_{ev}\to \mathbb{X}$, where
\[ 
\mathbb{X}_{ev} (A) = \mathbb{X}(\iota)(\mathbb{X}(A_0))\simeq \mathbb{X}(A_0), \quad \mbox{for} \quad  A\in \mathsf{SAlg}_{\Bbbk}, 
\]
and $\iota:  A_0\to A$ is the natural (super)algebra monomorphism.

Let $\mathsf{gr}(\mathbb{X})$ denote the functor of points of $\mathsf{gr}(X)$. As above, $\mathsf{gr}(\mathrm{SSp}(A))\simeq\mathrm{SSp}(\mathsf{gr}(A))$.

Let $X$ be a superscheme and $U$ be an open super-subscheme of $X$. By \cite[Proposition 5.12]{maszub2} we have
$\mathcal{O}_X(U)\simeq\mathrm{Mor}_{\mathcal{SF}}(\mathbb{U}, \mathbb{A}^{1|1})$, where $\mathbb{A}^{1|1}\simeq\mathrm{SSp}(\Bbbk [x\mid y])$.
If it does not lead to confusion, we denote $\mathcal{O}_X(U)$ just by $\mathcal{O}(U)$, or by $\mathcal{O}(\mathbb{U})$ respectively.

A group object in the category of superschemes of locally finite type over $\Bbbk$ is called a \emph{locally algebraic group superscheme}.
Respectively, a group object in the category of superschemes of finite type over $\Bbbk$ is called an \emph{algebraic group superscheme}.
If $G$ (respectively, $\mathbb{G}$) is a (locally) algebraic group superscheme, then
$\mathsf{gr}(G)$ (respectively, $\mathsf{gr}(\mathbb{G})$) is a (locally) algebraic group superscheme as well.

Recall that with any group superscheme $G$ one can associate a \emph{largest affine factor-group superscheme}
$G^{aff}$ such that any group superscheme morphism $G\to H$, with $H$ to be affine, factors through $G\to G^{aff}$ (cf. \cite[Corollary 5.3]{maszub3}). By the equivalence of categories, the functor of points of $G^{aff}$ is the largest  affine factor-group superscheme of $\mathbb{G}$, and it is denoted by $\mathbb{G}^{aff}$. If $G$ (respectively, $\mathbb{G}$) is algebraic, then $G^{aff}\simeq\mathrm{SSpec}(\mathcal{O}(G))$ (respectively, $\mathbb{G}^{aff}\simeq\mathrm{SSp}(\mathcal{O}(\mathbb{G}))$).

For any $g\in \mathbb{G}(\Bbbk)$ we have a natural transformation $l_g:  \mathbb{G}\to\mathbb{G}$, defined as $x\mapsto gx$, where  $x\in\mathbb{G}(A)$ and $A\in\mathsf{SAlg}_{\Bbbk}$. We call it a \emph{left shift transformation} (by $g$). The \emph{right shift transformation} $r_g$ is defined symmetrically. Since both left and right shifts are  natural isomorphisms of $\mathbb{G}$ onto itself, they take closed (open) subfunctors to closed (open) subfunctors as well. In particular, for any closed group super-subscheme $\mathbb{H}$ of $\mathbb{G}$ the \emph{left and right cosets} $l_g(\mathbb{H})$ and
$r_g(\mathbb{H})$ are closed super-subschemes in $\mathbb{G}$. We denote them by $g\mathbb{H}$ and $\mathbb{H}g$ respectively.

\section{Harish-Chandra pairs and the fundamental equivalence}

For the content of this section we refer to \cite{maszub3}.

Recall that a couple $(\mathsf{G}, \mathsf{V})$ is called a \emph{Harish-Chandra pair}, provided $\mathsf{G}$ is a purely even locally algebraic group superscheme (which can be regarded as group scheme as well), $\mathsf{V}$ is a finite dimensional $\mathsf{G}$-(super)module and the following conditions hold:
\begin{enumerate}
\item[(a)] there is a symmetric bilinear map $\mathsf{V}\times\mathsf{V}\to\mathrm{Lie}(\mathsf{G}), \ (v, w)\mapsto [v, w]$, $v, w\in\mathsf{V}$,  that is $\mathsf{G}$-equivariant with respect to the diagonal action of $\mathsf{G}$ on $\mathsf{V}\times\mathsf{V}$ and the adjoint action of $\mathsf{G}$ on $\mathrm{Lie}(\mathsf{G})$;
\item[(b)] the induced action of $\mathrm{Lie}(\mathsf{G})$ on $\mathsf{V}$ satisfies $[v, v]\cdot v=0$ for any $v\in\mathsf{V}$.
\end{enumerate}
Harish-Chandra pairs form a category with morphisms $(\mathsf{f}, \mathsf{u}):  (\mathsf{G}, \mathsf{V})\to (\mathsf{H}, \mathsf{W})$, where
$\mathsf{f}:  \mathsf{G}\to \mathsf{H}$ is a morphism of group schemes and $\mathsf{u}:  \mathsf{V}\to \mathsf{W}$ is a morphism of $\mathsf{G}$-modules ($\mathsf{W}$ is regarded as a $\mathsf{G}$-module via $\mathsf{f}$). Besides, the diagram
\[
\begin{array}{ccc}
\mathsf{V}\times\mathsf{V} & \to & \mathrm{Lie}(\mathsf{G}) \\
\downarrow & & \downarrow \\
\mathsf{W}\times\mathsf{W} & \to & \mathrm{Lie}(\mathsf{H})
\end{array} 
\]
is commutative, where the first vertical map is $\mathsf{u}\times\mathsf{u}$ and the second vertical map
is the differential of $\mathsf{f}$. The category of Harish-Chandra pairs is denoted by $\mathsf{HCP}$.

There is a functor from the category of locally algebraic group superschemes to the category of Harish-Chandra pairs
\[
G\to (G_{ev}, \mathrm{Lie}(G)_1)  
\]
or
\[
\mathbb{G}\to (\mathbb{G}_{ev}, \mathrm{Lie}(\mathbb{G})_1).
\]
The action of $G_{ev}$ (respectively, of $\mathbb{G}_{ev}$) on
$\mathrm{Lie}(G)_1$ (respectively, on $\mathrm{Lie}(\mathbb{G})_1$) is the restriction of the adjoint action of
$G$ (respectively, of $\mathbb{G}$) on $\mathrm{Lie}(G)$ (respectively, on $\mathrm{Lie}(\mathbb{G})$).
This functor is called \emph{Harish-Chandra functor} and denoted by $\Phi$.
\begin{tr}(see \cite[Theorem 12.10]{maszub3})
The Harish-Chandra functor is an equivalence.	
\end{tr}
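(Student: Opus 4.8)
The plan is to exhibit a quasi-inverse functor $\Psi$ from $\mathsf{HCP}$ to the category of locally algebraic group superschemes and to produce natural isomorphisms $\Phi\Psi\simeq\mathrm{id}$ and $\Psi\Phi\simeq\mathrm{id}$. Given a Harish-Chandra pair $(\mathsf{G},\mathsf{V})$, the first step is to assemble the $\mathbb{Z}_2$-graded vector space $\mathfrak{g}=\mathrm{Lie}(\mathsf{G})\oplus\mathsf{V}$ into a Lie superalgebra: the bracket on $\mathrm{Lie}(\mathsf{G})$ is the given one, the bracket $\mathrm{Lie}(\mathsf{G})\times\mathsf{V}\to\mathsf{V}$ is the differential of the $\mathsf{G}$-action, and the bracket $\mathsf{V}\times\mathsf{V}\to\mathrm{Lie}(\mathsf{G})$ is the symmetric map from condition~(a). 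Conditions~(a) and~(b) are precisely what is needed to verify the super-Jacobi identity, its purely odd $\mathsf{V}\times\mathsf{V}\times\mathsf{V}$ component reducing to~(b). I would then build $\Psi(\mathsf{G},\mathsf{V})$ through its functor of points: for $A\in\mathsf{SAlg}_\Bbbk$ the group $\Psi(\mathsf{G},\mathsf{V})(A)$ is generated by $\mathsf{G}(A_0)$ together with the odd exponentials $\exp(\sum_i \eta_i\otimes v_i)$, $\eta_i\in A_1$, $v_i\in\mathsf{V}$, subject to relations dictated by $\mathfrak{g}$ and the $\mathsf{G}$-action. Because $\Bbbk$ has characteristic $\neq 2$, each odd $\eta$ satisfies $\eta^2=0$, so a single such exponential is genuinely polynomial, $\exp(\eta v)=1+\eta v$; in positive characteristic the general series must be interpreted through the distribution (hyper-)superalgebra of $\mathfrak{g}$ rather than as a literal $\sum X^n/n!$, and I would phrase the construction accordingly. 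On an affine piece this amounts to equipping $\mathcal{O}(\mathsf{G})\otimes\Lambda(\mathsf{V}^*)$ with a Hopf superalgebra structure, and the locally algebraic case is then obtained by gluing these pieces over an affine cover of $\mathsf{G}$.

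Next I would verify the two natural isomorphisms. The direction $\Phi\Psi\simeq\mathrm{id}$ is the more computational one: restricting $\Psi(\mathsf{G},\mathsf{V})$ to purely even test superalgebras $A=A_0$ kills all the odd exponentials and returns $\mathsf{G}$, so $\Psi(\mathsf{G},\mathsf{V})_{ev}\simeq\mathsf{G}$, while differentiating the odd exponentials identifies $\mathrm{Lie}(\Psi(\mathsf{G},\mathsf{V}))_1$ with $\mathsf{V}$, carrying the adjoint action and the bracket back to the given data. The direction $\Psi\Phi\simeq\mathrm{id}$ is the structural heart: for a locally algebraic $G$ I would construct the canonical morphism $\Psi(G_{ev},\mathrm{Lie}(G)_1)\to G$ out of the inclusion $G_{ev}\hookrightarrow G$ and the exponentials $1+\eta v$, $v\in\mathrm{Lie}(G)_1$, interpreted inside $G(A)$, and prove it is an isomorphism. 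Here I would pass to the associated graded and reduce to the splitting isomorphism $\mathsf{gr}(\mathcal{O}(G))\simeq\mathcal{O}(G_{ev})\otimes\Lambda(\mathrm{Lie}(G)_1^*)$ of Hopf superalgebras, that is, to the \emph{odd-smoothness} of group superschemes: locally the structure sheaf is free over its even quotient on the odd cotangent directions.

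Finally, for full faithfulness on morphisms, a morphism $G\to H$ of group superschemes restricts to the pair $(G_{ev}\to H_{ev},\ \mathrm{Lie}(G)_1\to\mathrm{Lie}(H)_1)$, which is a morphism in $\mathsf{HCP}$ by functoriality of $\mathrm{Lie}$ and of $(-)_{ev}$; conversely a Harish-Chandra morphism lifts through the generators-and-relations description of $\Psi$, the compatibility diagram for the brackets being exactly what makes the lift a well-defined group homomorphism. Uniqueness and existence of the lift then follow from the reconstruction $\Psi\Phi\simeq\mathrm{id}$ established above. The hard part will be this reconstruction: establishing the splitting/odd-smoothness statement uniformly in positive characteristic, where the naive exponential and the factors $1/n!$ are unavailable and the distribution-algebra formulation becomes indispensable, and then checking that the local exterior-algebra models glue into a single \emph{group} superscheme rather than merely a superscheme, i.e. that the multiplication is compatible across the affine cover of $G_{ev}$. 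This is precisely what makes the statement a genuine theorem rather than a formal manipulation.
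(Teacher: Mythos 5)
Your plan is sound and, in outline, it is the strategy behind the proof that this paper relies on; but note that the paper does not prove this theorem at all --- it quotes it from \cite[Theorem 12.10]{maszub3} and records only the construction of the quasi-inverse $\Psi$. Measured against that construction, your odd exponentials are exactly the generators $e(a,v)$ of the paper, and the relations you describe as ``dictated by $\mathfrak{g}$ and the $\mathsf{G}$-action'' are spelled out there as relations (i)--(vi); observe that relation (iv) carries the factor $\frac{[v,v]}{2}$, which is where the standing hypothesis $\mathrm{char}\,\Bbbk\neq 2$ enters, and that beyond this no divided powers or distribution superalgebras are ever needed, since the only elements exponentiated are square-zero odd elements $a\otimes v$ --- so the characteristic-$p$ difficulty you anticipate largely evaporates in this formulation. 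The genuine divergence is in how representability is established. The route of \cite{maszub3} defines the group functor globally, $\mathbb{G}(A)$ being a quotient of the free product $\mathsf{G}(A_0)*\mathrm{F}(A)$, so the group law exists by construction and never has to be glued; all the work is then concentrated in a normal-form theorem (\cite[Corollary 12.4 and Theorem 12.5]{maszub3}): every element of $\mathbb{G}(A)$ is uniquely of the form $g\,e(a_1,v_1)\cdots e(a_t,v_t)$ for a fixed basis $v_1,\ldots,v_t$ of $\mathsf{V}$, which at once identifies $\mathbb{G}$ with $\mathsf{G}\times\mathrm{SSp}(\Lambda(\mathsf{V}^*))$ as a superscheme. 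Your route instead builds local Hopf-superalgebra models $\mathcal{O}(\mathsf{U})\otimes\Lambda(\mathsf{V}^*)$ over an affine cover of $\mathsf{G}$ and glues them; this is the weak point, because the affine pieces of a non-affine $\mathsf{G}$ are not subgroups, hence carry no Hopf structure, and the comultiplication does not live on any single piece --- the compatibility of multiplication across the cover, which you honestly flag as ``the hard part,'' is precisely what the functor-of-points definition is designed to sidestep (compare the delicate gluing of the multiplication over a cover that the paper itself must perform in the proof of Proposition \ref{refine}, even in the easier graded case). Your treatment of the two composites agrees with the cited proof: $\Phi\Psi\simeq\mathrm{id}$ by evaluating on purely even superalgebras and differentiating the odd generators, and $\Psi\Phi\simeq\mathrm{id}$ via the product decomposition of $G$ into $G_{ev}$ and the formal neighborhood of the identity, which is the splitting you phrase as odd-smoothness of $\mathsf{gr}(\mathcal{O}(G))$.
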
	
The quasi-inverse of $\Phi$ is denoted by $\Psi$. For any Harish-Chandra pair $(\mathsf{G}, \mathsf{V})$ the group superscheme $\mathbb{G}=\Psi((\mathsf{G}, \mathsf{V}))$ is constructed as follows.

Let $\Bbbk [\varepsilon]$ denote the algebra of \emph{dual numbers}, i.e. this is the
algebra generated by the element $\varepsilon$ subject to the relation $\varepsilon^2=0$. Then we have an exact sequence of groups
\[
1\to \mathrm{Lie}(\mathsf{G}) \to \mathsf{G}(\Bbbk [\varepsilon])\stackrel{\mathsf{G}(p)}{\to} \mathsf{G}(\Bbbk)\to 1,    
\]
where $p:  \Bbbk [\varepsilon]\to \Bbbk$ is the algebra morphism such that $p(\varepsilon)=0$ (cf. \cite[II, \S 4]{dg}). The image of $x\in \mathrm{Lie}(\mathsf{G})$ in $\mathsf{G}(\Bbbk [\epsilon])$ is denoted by $e^{\varepsilon x}$. For any superalgebra $A$ and $b\in A_0$ such that $b^2=0$ there is the unique algebra morphism $\alpha_b:  \Bbbk [\varepsilon]\to A_0$ that takes $\varepsilon$ to $b$. The element
$\mathsf{G}(\alpha_b)(e^{\varepsilon x})\in\mathsf{G}(A_0)$ is denoted by $f(b, x)$. Let $\mathrm{F}(A)$ denote a free group, freely generated by the formal elements $e(a, v)$,  where  $a\in A_1$, $v\in\mathsf{V}$.

Then the group $\mathbb{G}(A)$ is isomorphic to the factor-group of the free product $\mathsf{G}(A_0)*\mathrm{F}(A)$ modulo the relations
\begin{enumerate}
\item[(i)] $[e(a, v), e(a', v')] = f (-aa', [v, v'])$;
\item[(ii)] $[e(a, v), f(b, x)]=e(ab, [v, x])$;
\item[(iii)] $[f(b, x), f(b', x')]=f(bb', [x, x'])$;
\item[(iv)] $e(a, v)e(a', v)=f(-aa', \frac{[v, v]}{2})e(a+a', v)$;
\item[(v)] $e(a, v)^g=e(a, g\cdot v), \quad  f(b, x)^g=f(b, g\cdot x)$ \qquad for any   $g\in\mathsf{G}(A_0)$;
\item[(vi)]$e(a, cv)=e(ca, v)$ \qquad for any scalar $c\in\Bbbk$. 
\end{enumerate}
Here the group commutator $[u, v]$ is defined as $uv u^{-1}v^{-1}$ and $u^v$ denotes $vuv^{-1}$. A superalgebra morphism $\beta:  A\to A'$ induces the natural group morphism $\mathsf{G}(A_0)*\mathrm{F}(A)\to \mathsf{G}(A'_0)*\mathrm{F}(A')$ that takes $e(a, v)$ to $e(\beta(a), v)$, hence it preserves the relations (i)-(v). Thus $\mathbb{G}$ is obviously a group functor. Moreover, by \cite[Corollary 12.4 and Theorem 12.5]{maszub3} any element
of $\mathbb{G}(A)$ can be uniquely expressed as the product
\[
g e(a_1, v_1)\cdots e(a_t, v_t), 
\]
where $g\in\mathsf{G}(A_0)$ and $v_1, \ldots, v_t$ is a fixed basis of $\mathsf{V}$. In particular, $\mathbb{G}$ is isomorphic to
$\mathsf{G}\times \mathrm{SSp}(\Lambda(\mathsf{V}^*))$ as a superscheme.

\section{Actions, orbits and quotients}

Let $\mathbb{G}$ be a group superscheme and $\mathbb{X}$ be a superscheme. The functor morphism ${\bf f}:  \mathbb{X}\times\mathbb{G}\to \mathbb{X}$
defines a \emph{right action} of $\mathbb{G}$ on $\mathbb{X}$, provided the following properties hold:
\begin{enumerate}
\item ${\bf f}(A)(x, 1)=x$;
\item ${\bf f}(A)({\bf f}(A)(x, g), h)={\bf f}(A)(x, gh),\qquad  x\in\mathbb{X}(A), \ g,h\in\mathbb{G}(A),\ A\in\mathsf{SAlg}_{\Bbbk}$.	
\end{enumerate}
A left action is defined similarly.

In what folllows we denote ${\bf f}(A)(x, g)$ just by $xg$ (respectively, $gx$ for a left action), if it does not lead to confusion. A given action is called \emph{free}, whenever $xg=x$ (respectivelt, $gx=x$) implies $g=e$ for any $x\in\mathbb{X}(A), g\in\mathbb{G}(A)$.

With each (right) free action of $\mathbb{G}$ on $\mathbb{X}$ one can associate the functor
\[ 
A\mapsto \mathbb{X}(A)/\mathbb{G}(A),\qquad  A\in\mathsf{SAlg}_{\Bbbk}, 
\]
which is called a \emph{naive quotient} and denoted by $(\mathbb{X}/\mathbb{G})_{(n)}$. The sheafification of this functor in the \emph{Grothendieck topology of fppf coverings} is denoted by $\mathbb{X}/\mathbb{G}$ (see \cite{maszub2} for more details). The functor $(\mathbb{X}/\mathbb{G})_{(n)}$ is a \emph{dense subfunctor}  of
$\mathbb{X}/\mathbb{G}$ in the following sense. For any superalgebra $A$ and any $x\in (\mathbb{X}/\mathbb{G})(A)$ there is a \emph{finitely presented} $A$-superalgebra $A'$, which is a faithfully flat $A$-supermodule, such that $(\mathbb{X}/\mathbb{G})(\iota_A)(x)$ belongs to
$\mathbb{X}(A')/\mathbb{G}(A')$. Here $\iota_A:  A\to A'$ is the corresponding (injective) superalgebra morphism. We call such superalgebra $A'$ a \emph{fppf covering} of $A$ as well.

In general, $\mathbb{X}/\mathbb{G}$ is no longer superscheme. But if $\mathbb{X}$ is an algebraic group superscheme and $\mathbb{G}$ is its closed group super-subscheme, naturally acting on $\mathbb{X}$ on the right, then $\mathbb{X}/\mathbb{G}$ is a superscheme of finite type over $\Bbbk$. Moreover, the quotient morphism
$\mathbb{X}\to \mathbb{X}/\mathbb{G}$ is faithfully flat (see \cite[Theorem 14.1]{maszub3}).

Another case when $\mathbb{X}/\mathbb{G}$ is a superscheme is the following. Let $\mathbb{X}$ be an affine superscheme and $\mathbb{G}$ is a finite (hence affine) group superscheme, acting freely on $\mathbb{X}$. Then $\mathbb{X}/\mathbb{G}$ is an affine superscheme (see \cite{zub1}). This statement is partially covered by the recent remarkable result \cite[Theorem 1.8]{masoeyuta}. The authors proved that if $\mathbb{X}$ is a locally Noetherian superscheme and $\mathbb{G}$ is
affine, then $\mathbb{X}/\mathbb{G}$ is a superscheme if and only if $\mathbb{X}_{ev}/\mathbb{G}_{ev}$ is a scheme.

Assume that $\mathbb{G}$ acts on $\mathbb{X}$ on the left. Take $x\in\mathbb{X}(\Bbbk)$ and consider the \emph{orbit morphism} ${\bf a}_x:  \mathbb{G}\to\mathbb{X}$,\quad  $g\mapsto gx$, where $g\in\mathbb{G}(A)$ and  $A\in\mathsf{SAlg}_{\Bbbk}$. The stabilizer $\mathbb{G}_x$ is a group subfunctor of $\mathbb{G}$ defined as
\[ 
\mathbb{G}_x(A)=\{g\in \mathbb{G}(A)\mid gx=x  \}, \qquad  A\in\mathsf{SAlg}_{\Bbbk}. 
\]
The subfunctor $\mathbb{G}_x$ is a closed group super-subscheme of $\mathbb{G}$. In fact, let $\mathbb{Y}$ denote the subfunctor of $\mathbb{X}$ such that $\mathbb{Y}(A)=\{x\}$ for any superalgebra $A$. By \cite[Lemma 1.1]{maszub3} $\mathbb{Y}$ is closed in $\mathbb{X}$, hence $\mathbb{G}_x={\bf a}_x^{-1}(\mathbb{Y})$ is closed in $\mathbb{G}$.

Further, $(\mathbb{G}/\mathbb{G}_x)_{(n)}$ is a subfunctor of $\mathbb{X}$. Its sheafifcation $\mathbb{G}/\mathbb{G}_x$ can be naturally identified with a \emph{sub-faisceau} of the \emph{faisceau} $\mathbb{X}$ (see \cite[Section 2]{zub2} for more details). In other words, ${\bf a}_x$ factors through the embeding $\mathbb{G}/\mathbb{G}_x\to \mathbb{X}$. The image of $\mathbb{G}/\mathbb{G}_x$ in $ \mathbb{X}$ is regarded as the $\mathbb{G}$-\emph{orbit} of $x$. We also denote it by $\mathbb{G}x$. Note that $\mathbb{G}(A)x$ is a proper subset of $(\mathbb{G}x)(A)$ in general. If $\mathbb{G}$ is an algebraic group superscheme, then $\mathbb{G}x\simeq \mathbb{G}/\mathbb{G}_x$ is a superscheme of finite type.

In terms of geometric superschemes, $x\in\mathbb{X}(\Bbbk)$ corresponds to a superscheme morphism $\mathrm{SSpec}(\Bbbk)\to X$. The latter morphism is uniquely defined by a point in $X^e$, which can be denoted by the same symbol $x$, such that $\kappa(x)=\Bbbk$. By the equivalence of categories, $G$ acts on $X$ on the left and we have the orbit morphism $a_x:  G\to X$ that
coincides with the composition $G\simeq G\times\mathrm{SSpec}(\Bbbk)\to G\times X\to X$ of superscheme morphisms.
Finally, if $G$ is an algebraic group superscheme, then $a_x$ factors through the corresponding morphism $G/G_x\to X$ of superschemes of finite type.

\section{Graded group superschemes}

A locally algebraic group superscheme $G$ is called \emph{graded} if there is a locally algebraic group superscheme $H$ such that $G\simeq\mathsf{gr}(H)$. The following proposition refines \cite[Proposition 11.1]{maszub3} and extends \cite[Lemma-Definition 3.9]{mastak}.

\begin{pr}\label{refine}
Let $G$ be a locally algebraic group superscheme. Set $\Phi(G)=(\mathsf{G}, \mathsf{V})$ or, equivalently, $\Psi((\mathsf{G}, \mathsf{V}))\simeq G$. The following statements are equivalent:
\begin{enumerate}
\item[(i)] $G$ is graded;
\item[(ii)] $G_{ev}\to G$ is split in the category of group superschemes;
\item[(iii)] The bilinear map $\mathsf{V}\times \mathsf{V}\to \mathrm{Lie}(\mathsf{G})$ is zero;
\item[(iv)] The Lie super-bracket on $\mathrm{Lie}(G)$, restricted to $\mathrm{Lie}(G)_1\times \mathrm{Lie}(G)_1$, vanishes.
\end{enumerate}	
\end{pr}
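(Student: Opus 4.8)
The plan is to prove the four conditions equivalent through the implications $(iii)\Leftrightarrow(iv)$, $(ii)\Rightarrow(iv)$, $(i)\Rightarrow(ii)$ and $(iii)\Rightarrow(i)$, working throughout with the Harish-Chandra description $\Phi(G)=(\mathsf{G},\mathsf{V})$ and the defining relations (i)--(vi) of $\mathbb{G}=\Psi((\mathsf{G},\mathsf{V}))$. The equivalence $(iii)\Leftrightarrow(iv)$ should be a matter of unwinding definitions: under the identifications $\mathsf{G}=G_{ev}$ and $\mathsf{V}=\mathrm{Lie}(G)_1$ one has $\mathrm{Lie}(\mathsf{G})=\mathrm{Lie}(G_{ev})=\mathrm{Lie}(G)_0$, and the bilinear map $\mathsf{V}\times\mathsf{V}\to\mathrm{Lie}(\mathsf{G})$ attached to $\Phi(G)$ is, by construction, the restriction of the Lie super-bracket of $\mathrm{Lie}(G)$ to $\mathrm{Lie}(G)_1\times\mathrm{Lie}(G)_1$; hence the two vanishing conditions coincide.

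For $(ii)\Rightarrow(iv)$ I would argue as follows. Assume $\iota:G_{ev}\to G$ admits a group-superscheme retraction $\pi:G\to G_{ev}$, so that $\pi\iota=\mathrm{id}$. Applying $\mathrm{Lie}$ gives $\mathrm{Lie}(\pi)\mathrm{Lie}(\iota)=\mathrm{id}$, and since $\mathrm{Lie}(\pi):\mathrm{Lie}(G)\to\mathrm{Lie}(G)_0$ is an even morphism of Lie superalgebras it restricts to the identity on $\mathrm{Lie}(G)_0$ and to zero on $\mathrm{Lie}(G)_1$, the target being purely even. Thus $\mathrm{Lie}(\ker\pi)=\ker\mathrm{Lie}(\pi)=\mathrm{Lie}(G)_1=\mathsf{V}$ is a purely odd Lie subsuperalgebra, whence $[\mathsf{V},\mathsf{V}]\subseteq\mathrm{Lie}(\ker\pi)_0=0$, which is condition $(iv)$. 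Next, for $(i)\Rightarrow(ii)$, I would write $G\simeq\mathsf{gr}(H)$, so that $\mathcal{O}_G$ is Grassman graded with homogeneous comultiplication; then the degree-$0$ part $\mathcal{O}_G(0)=\mathcal{O}_G/\mathcal{I}_G=\mathcal{O}_{G_{ev}}$ is a sub-Hopf-sheaf, and the inclusion $\mathcal{O}_G(0)\hookrightarrow\mathcal{O}_G$ is a local homomorphism defining a group-superscheme morphism $\pi:G\to G_{ev}$. Composing this inclusion with the quotient $\mathcal{O}_G\twoheadrightarrow\mathcal{O}_G/\mathcal{I}_G$ (which corresponds to $\iota$) is the identity, so $\pi\iota=\mathrm{id}_{G_{ev}}$ and $\iota$ is split.

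The remaining implication $(iii)\Rightarrow(i)$ is where I expect the main difficulty. The idea is to read off the group law from the presentation of $\mathbb{G}=\Psi((\mathsf{G},\mathsf{V}))$: once the bilinear map vanishes, relation (i) collapses to $[e(a,v),e(a',v')]=1$ and relation (iv) to $e(a,v)e(a',v)=e(a+a',v)$, while the surviving relations (ii), (iii), (v), (vi) do not involve this map at all. Using the superscheme isomorphism $\mathbb{G}\simeq\mathsf{G}\times\mathrm{SSp}(\Lambda(\mathsf{V}^*))$, I would equip $\mathcal{O}_G\simeq\mathcal{O}_{\mathsf{G}}\otimes\Lambda(\mathsf{V}^*)$ with the Grassman grading placing $\mathcal{O}_{\mathsf{G}}$ in degree $0$ and $\mathsf{V}^*$ in degree $1$, and then check that the group law is homogeneous for this grading. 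The point is that the only terms capable of carrying odd degree down into even degree are the factors $f(-aa',[v,v'])$ and $f(-aa',\tfrac{[v,v]}{2})$ of relations (i) and (iv), and these vanish precisely because the bracket is zero. Granting homogeneity, $\mathcal{O}_G$ becomes a graded Hopf superalgebra sheaf, so $\mathcal{O}_G\simeq\mathsf{gr}(\mathcal{O}_G)$ by the characterization of Grassman graded superalgebras recalled before Remark~\ref{graded}, and therefore $G\simeq\mathsf{gr}(G)$ is graded.

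The hard part will be exactly this last compatibility check: one must verify sheaf-theoretically, and functorially over all test superalgebras, that the Grassman grading on $\mathcal{O}_G$ is preserved by the \emph{comultiplication}, i.e. that the vanishing of the bracket genuinely upgrades the graded superscheme $\mathsf{G}\times\mathrm{SSp}(\Lambda(\mathsf{V}^*))$ to a graded group superscheme. The collapse of relations (i) and (iv) makes this highly plausible, but converting it into a clean, coordinate-free statement about the Hopf structure is where the real work lies; the other three implications are comparatively direct consequences of the Harish-Chandra formalism and the left-exactness of $\mathrm{Lie}$.
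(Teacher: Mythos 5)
Your implication cycle (i)$\Rightarrow$(ii)$\Rightarrow$(iv)$\Leftrightarrow$(iii)$\Rightarrow$(i) is logically complete, and three of the four arrows are essentially sound. In particular, your proof of (ii)$\Rightarrow$(iv) by differentiating the retraction $\pi:G\to G_{ev}$ is a legitimate alternative to the paper's treatment, which instead translates (ii) into the category of Harish-Chandra pairs and reads (ii)$\Leftrightarrow$(iii) off the commutative square defining a morphism of pairs; your version is more elementary (it uses only that $\mathrm{Lie}$ is a parity-preserving functor with $\mathrm{Lie}(G_{ev})=\mathrm{Lie}(G)_0$), while the paper's gives both directions at once. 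Your (i)$\Rightarrow$(ii) re-derives the retraction $q:\mathsf{gr}(H)\to H_{ev}$ which the paper simply cites from \cite[Section 11]{maszub3}; that is acceptable, though for non-affine $G$ your ``sub-Hopf-sheaf'' language needs the gluing formulation the paper uses elsewhere.

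The genuine gap is in (iii)$\Rightarrow$(i), and you have located it yourself: you declare the homogeneity of the group law ``highly plausible'' and defer the verification as ``where the real work lies.'' That verification \emph{is} the proof, and it occupies almost all of the paper's argument. Moreover, your heuristic --- that the only threat to homogeneity is the terms $f(-aa',[v,v'])$ and $f(-aa',\tfrac{[v,v]}{2})$ --- is a statement about generators and relations of the abstract groups $\mathbb{G}(A)$, not about the structure sheaf, and it passes over the other place where degree could be destroyed: the conjugation action of $\mathbb{G}_{ev}$ on $\mathbb{G}_{odd}$ entering relation (v), i.e.\ the twist $e^{g'^{-1}}$ in the semidirect-product multiplication. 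The paper's proof spends its effort exactly here. By \cite[Proposition 11.1]{maszub3}, condition (iii) makes $\mathbb{G}$ a semidirect product of $\mathbb{G}_{ev}$ and $\mathbb{G}_{odd}\simeq\mathrm{SSp}(\Lambda(\mathsf{V}^*))$; since $\mathsf{G}$ need not be affine, one first shows the conjugation action factors through the affinization $\mathbb{G}_{ev}\to\mathbb{G}_{ev}^{aff}\simeq\mathrm{SSp}(\mathcal{O}(\mathsf{G}))$, so that it is encoded by a comodule map $\tau:\mathsf{V}^*\to\mathsf{V}^*\otimes\mathcal{O}(\mathsf{G})$, which places $\mathsf{V}^*$ in degree $1\otimes 0$ and is therefore degree-preserving; then the multiplication ${\bf m}$ is glued from affine pieces $(\mathbb{U}\times\mathbb{U}')_f\to\mathbb{U}''$ whose dual superalgebra maps are written explicitly in terms of $\tau$, the comultiplication of $\Lambda(\mathsf{V}^*)$ and the dual of $\mathsf{m}|_{(\mathsf{U}\times\mathsf{U}')_f}$, and are checked to be graded; finally Remark~\ref{graded} together with \cite[Proposition 10.3(1)]{maszub3} yields $\mathbb{G}\simeq\mathsf{gr}(\mathbb{G})$. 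To complete your proposal you must supply precisely this chain: the linearization of the $\mathbb{G}_{ev}$-action via $\tau$ (which is where affinization is unavoidable), the affine-local description of ${\bf m}$, and the gluing step.
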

\begin{proof}
The equivalence (iii)$\leftrightarrow$(iv) is obvious.
	
If $G\simeq\mathsf{gr}(H)$, then $G_{ev}\simeq \mathsf{gr}(H)_{ev}\simeq H_{ev}$. Moreover, there is a group superscheme morphism $q:  \mathsf{gr}(H)\to H_{ev}$ whose composition with the embedding $H_{ev}\to \mathsf{gr}(H)$ is the identity morphism (see \cite[Section 11]{maszub3}). This implies (i)$\rightarrow$(ii).	

In the language of Harish-Chandra pairs $G_{ev}\to G$ is split if and only if there is a morphism of pairs
$(\mathsf{f}, 0):  (\mathsf{G}, \mathsf{V})\to (\mathsf{G}, 0)$ such that its composition with $(\mathrm{id}_{\mathsf{G}}, 0):  (\mathsf{G}, 0)\to (\mathsf{G}, \mathsf{V})$ is again $(\mathrm{id}_{\mathsf{G}}, 0)$. Thus $\mathsf{f}=\mathrm{id}_{\mathsf{G}}$ and the commutative diagram
\[
\begin{array}{ccc}
\mathsf{V}\times\mathsf{V} & \to & \mathrm{Lie}(\mathsf{G}) \\
\downarrow & & \parallel \\
0\times 0 & \to & \mathrm{Lie}(\mathsf{G})
\end{array} 
\]
infers (ii)$\leftrightarrow$(iii).

If (iii) holds, then $\mathbb{G}$ is a semi-direct product of $\mathbb{G}_{ev}$ and a normal abelian purely odd unipotent group super-subscheme $\mathbb{G}_{odd}$ (cf. \cite[Proposition 11.1]{maszub3}). More precisely, for any $A\in\mathsf{SAlg}_{\Bbbk}$ the group $\mathbb{G}_{odd}(A)$ is generated by
the elements $e(a, v)$, which are pairwise commuting and satisfy $e(a, v)e(a', v)=e(a+a', v)$. In other words, $\mathbb{G}_{odd}\simeq\mathrm{SSp}(\Lambda(\mathsf{V}^*))$, where all elements of $\mathsf{V}^*$ are supposed to be primitive in
the Hopf superalgebra $\Lambda(\mathsf{V}^*)$.

 It remains to show that $\mathbb{G}\simeq\mathsf{gr}(\mathbb{G})$, or, equivalently, $G\simeq\mathsf{gr}(G)$.
The superscheme $\mathbb{G}\simeq \mathsf{G}\times\mathrm{SSp}(\Lambda(\mathsf{V}^*))$ can be covered by open affine super-subschemes of the form $\mathbb{U}=\mathsf{U}\times\mathrm{SSp}(\Lambda(\mathsf{V}^*))$, where $\mathsf{U}$ runs over open affine subschemes of $\mathsf{G}$.

The multiplication morphism ${\bf m}:  \mathbb{G}\times\mathbb{G}\to\mathbb{G}$ is defined as
\[
(g, e)\times (g', e')\mapsto (g g')\times (e^{g'^{-1}} e'),\qquad  g, g'\in\mathbb{G}_{ev}, e, e'\in \mathbb{G}_{odd}. 
\]
The conjugation action of $\mathbb{G}_{ev}$ on $\mathbb{G}_{odd}\simeq \mathrm{SSp}(\Lambda(\mathsf{V}^*))$ is uniquely defined by the induced action of $\mathbb{G}_{ev}\simeq \mathsf{G}$ on $\mathsf{V}^*$, hence it factors through $\mathbb{G}_{ev}\to\mathbb{G}_{ev}^{aff}\simeq \mathrm{SSp}(\mathcal{O}(\mathsf{G}))$ (cf. \cite[Corollary 5.3 and Lemma 8.1]{maszub3}). Let $\tau$ be the corresponding $\mathcal{O}(\mathsf{G})$-comodule map $\mathsf{V}^*\to\mathsf{V}^*\otimes \mathcal{O}(\mathsf{G}), \tau(v^*)=v^*_{(1)}\otimes v^*_{(2)}, v^*\in\mathsf{V}^*$, where we omit the symbol of summation in Sweedler's notation for $\tau$.

Let $\mathsf{m}$ denote the multiplication morphism $\mathsf{G}\times\mathsf{G}\to\mathsf{G}$. For any triple $\mathsf{U}, \mathsf{U}', \mathsf{U}''$ of open affine subschemes of $\mathsf{G}$, the open subscheme $\mathsf{m}^{-1}(\mathsf{U}'')\cap (\mathsf{U}\times \mathsf{U}')$
of $\mathsf{G}\times\mathsf{G}$ can be covered by open subschemes of the form $(\mathsf{U}\times\mathsf{U}')_f\simeq \mathrm{Sp}((\mathcal{O}(\mathsf{U})\otimes\mathcal{O}(\mathsf{U}'))_f), f\in \mathcal{O}(\mathsf{U})\otimes\mathcal{O}(\mathsf{U}')$.
Since $\mathsf{m}$ can be glued from all $\mathsf{m}|_{(\mathsf{U}\times\mathsf{U}')_f}:  (\mathsf{U}\times\mathsf{U}')_f\to \mathsf{U}''$, the multiplication map ${\bf m}:  \mathbb{G}\times \mathbb{G}\to\mathbb{G}$ can be glued from  all $(\mathbb{U}\times\mathbb{U}')_f\to \mathbb{U}''$.
More precisely, each morphism $(\mathbb{U}\times\mathbb{U}')_f\to \mathbb{U}''$ is dual to
the superalgebra morphism $\mathcal{O}((\mathbb{U}\times\mathbb{U}')_f)\to\mathcal{O}(\mathbb{U}'')$, defined by
\[
\textstyle
x\otimes y\mapsto \frac{x_{(1)}\otimes x_{(2)}(y_{(1)})_{(2)}}{f^s}\otimes (y_{(1)})_{(1)}\otimes y_{(2)}, 
\]
where $x\in\mathcal{O}(\mathsf{U}''), y\in\Lambda(\mathsf{V}^*)$, the algebra morphism
\[
\textstyle
x\mapsto  \frac{x_{(1)}\otimes x_{(2)}}{f^s} \qquad\quad  (s \ \mbox{depends on } \ x)  \]
is dual to $\mathsf{m}|_{(\mathsf{U}\times\mathsf{U}')_f}$, and
\[
y\mapsto y_{(1)}\otimes y_{(2)}, \qquad  y_{(1)}\mapsto (y_{(1)})_{(1)}\otimes (y_{(1)})_{(2)}  
\]
are the comultiplication in $\Lambda(\mathsf{V}^*)$ and $(\mathrm{id}_{\Lambda(\mathsf{V}^*)}\otimes \mathrm{res}_{\mathsf{G}, \mathsf{U}'})\wedge(\tau)$ respectively.  Thus obviously follows that $\mathcal{O}((\mathbb{U}\times\mathbb{U}')_f)\to\mathcal{O}(\mathbb{U}'')$ is a morphism of graded superalgebras. Remark \ref{graded} combined with \cite[Proposition 10.3(1)]{maszub3} conclude the proof.
\end{proof}
The following proposition extends \cite[Proposition 4.18]{mastak} and \cite[Corollary 6.23]{sher}.
\begin{pr}\label{graded quotient}
Let $G$ be an algebraic group superscheme and $H$ be its closed group super-subscheme. Then
$\mathsf{gr}(G/H)\simeq \mathsf{gr}(G)/\mathsf{gr}(H)$ and $(G/H)_{ev}\simeq G_{ev}/H_{ev}$.
\end{pr}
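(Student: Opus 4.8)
The plan is to produce, for each asserted isomorphism, a canonical comparison morphism coming from functoriality, and then to prove it is an isomorphism by exploiting that the quotient morphism $\pi\colon G\to G/H$ is faithfully flat (\cite[Theorem 14.1]{maszub3}) and, more precisely, is an fppf $H$-torsor, so that fppf-locally on $G/H$ one has $G\simeq (G/H)\times H$.

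For the first isomorphism I would apply the endofunctor $\mathsf{gr}$. Because $H\hookrightarrow G$ is a closed immersion, $\mathsf{gr}$ carries the surjection $\mathcal{O}_G\to\iota_*\mathcal{O}_H$ to a surjection on each graded component, so $\mathsf{gr}(H)$ is a closed group super-subscheme of $\mathsf{gr}(G)$, and the free right $H$-action on $G$ induces a free right $\mathsf{gr}(H)$-action on $\mathsf{gr}(G)$ for which $\mathsf{gr}(\pi)$ is invariant; this yields a canonical morphism $\theta\colon \mathsf{gr}(G)/\mathsf{gr}(H)\to\mathsf{gr}(G/H)$. To see that $\theta$ is an isomorphism it suffices to check that $\mathsf{gr}(\pi)$ realizes $\mathsf{gr}(G/H)$ as the fppf quotient, i.e. that $\mathsf{gr}(\pi)$ is faithfully flat and the square $\mathsf{gr}(G)\times\mathsf{gr}(H)\to\mathsf{gr}(G)\times_{\mathsf{gr}(G/H)}\mathsf{gr}(G)$, $(g,h)\mapsto(g,gh)$, is an isomorphism. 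Here I would use local triviality: over an fppf cover trivializing $\pi$ the morphism becomes the projection $(G/H)\times H\to G/H$, and since $\mathsf{gr}$ commutes with products over $\Bbbk$ (because $I_{A\otimes B}=I_A\otimes B+A\otimes I_B$), $\mathsf{gr}(\pi)$ becomes fppf-locally the projection $\mathsf{gr}(G/H)\times\mathsf{gr}(H)\to\mathsf{gr}(G/H)$, which is faithfully flat and for which the torsor square is evidently an isomorphism; both properties then descend.

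For the second isomorphism the composite $G_{ev}\hookrightarrow G\xrightarrow{\pi}G/H$ is invariant under $H_{ev}\subseteq H$ and, since $G_{ev}$ is purely even, factors through the largest purely even super-subscheme $(G/H)_{ev}$; by the universal property of the quotient it descends to $\phi\colon G_{ev}/H_{ev}\to (G/H)_{ev}$. On underlying spaces $\phi^e$ is the classical quotient map $G_{res}\to G_{res}/H_{res}$, a homeomorphism onto $(G/H)^e$ because faithful flatness makes $\pi^e$ submersive; on structure sheaves one identifies the reduction modulo the odd ideal of $\mathcal{O}_{G/H}$ with the $H_{res}$-invariants of $\pi_*\mathcal{O}_{G_{res}}$, which is the classical quotient sheaf. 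Alternatively, using that the even part of a graded superscheme is its degree-zero component (so $(\mathsf{gr}(Y))_{ev}\simeq Y_{ev}$) together with the first isomorphism, part (2) reduces to the graded case, where the semidirect-product description $\mathsf{gr}(G)\simeq G_{ev}\ltimes\mathrm{SSp}(\Lambda(\mathsf{V}^*))$ furnished by Proposition \ref{refine} makes the even part of the quotient transparent.

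The main obstacle I anticipate is that neither $\mathsf{gr}$ nor $(-)_{ev}$ is exact, so one cannot simply apply them to the (co)equalizers defining the quotient; in particular $\mathsf{gr}$ does not commute with arbitrary fiber products. The decisive technical input is therefore the fppf-local triviality of the torsor $\pi$ together with the compatibility of $\mathsf{gr}$ and $(-)_{ev}$ with the flat base changes arising from a trivializing cover---ideally one chosen in the even direction, so that the odd generators stay constant along the cover and the associated-graded and even-reduction functors pass through the descent. Making this compatibility precise, and checking that the comparison morphisms $\theta$ and $\phi$ are exactly the ones induced by descent, is the step that carries the real content.
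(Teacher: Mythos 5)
Your construction of the comparison morphisms $\theta\colon \mathsf{gr}(G)/\mathsf{gr}(H)\to\mathsf{gr}(G/H)$ and $\phi\colon G_{ev}/H_{ev}\to (G/H)_{ev}$ is sound, and your abstract criterion is the right one: if $\mathsf{gr}(\pi)$ is faithfully flat and the square $\mathsf{gr}(G)\times\mathsf{gr}(H)\to\mathsf{gr}(G)\times_{\mathsf{gr}(G/H)}\mathsf{gr}(G)$ is an isomorphism, then $\theta$ is an isomorphism. The gap is that your proposed verification of this criterion is circular, not merely unfinished. The only fppf trivialization of $\pi$ available a priori is $G\to G/H$ itself, via $G\times_{G/H}G\simeq G\times H$. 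To transport it through $\mathsf{gr}$ you need $\mathsf{gr}(G\times_{G/H}G)\simeq\mathsf{gr}(G)\times_{\mathsf{gr}(G/H)}\mathsf{gr}(G)$; since the left-hand side is $\mathsf{gr}(G)\times\mathsf{gr}(H)$ (by compatibility of $\mathsf{gr}$ with products over $\Bbbk$), this identity \emph{is} the torsor-square statement you are trying to prove, and descent along $\mathsf{gr}(\pi)$ cannot be invoked before you know $\mathsf{gr}(\pi)$ is a cover. There is no soft general principle to fall back on: $\mathsf{gr}$ genuinely fails to commute with fiber products (for $\Bbbk[x]\to\Lambda(y_1,y_2)$, $x\mapsto y_1y_2$, one has $\dim_{\Bbbk}(B\otimes_A B)=10$ while $\dim_{\Bbbk}(\mathsf{gr}(B)\otimes_{\mathsf{gr}(A)}\mathsf{gr}(B))=16$, since $\mathsf{gr}$ kills the map $x\mapsto y_1y_2$), and faithful flatness of a superalgebra map $A\to B$ does not in general pass to $\mathsf{gr}(A)\to\mathsf{gr}(B)$. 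You flag this yourself as ``the step that carries the real content,'' which is an accurate self-assessment: it is the entire content, and it is not supplied.

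The paper closes exactly this hole by abandoning descent and exhibiting, by hand, affine charts of $G/H$ on which everything is computable. Via the Harish-Chandra equivalence one first settles the case of normal $\mathbb{H}$ by \cite[Theorem 12.11]{maszub3}, then passes through a normal subgroup $\mathbb{L}\leq\mathbb{H}$, and then uses the reduction of \cite[Lemma 14.2]{maszub3} to the situation $\mathsf{G}=\mathsf{M}\times\mathsf{A}(\mathsf{G})$ with $\mathsf{H}$ contained in the affine factor $\mathsf{M}$. In that situation $\mathbb{G}$ is covered by $\mathbb{H}$-saturated affine opens $\mathbb{V}=\mathsf{U}\times\mathrm{SSpec}(\Lambda(\mathsf{V}^*))$, and a cotensor-product decomposition identifies $\mathcal{O}(\mathbb{V}/\mathbb{H})$ with $(\mathcal{O}(\mathsf{U})\otimes\Lambda(\mathsf{Z}))^{\mathcal{O}(\mathsf{H})}$, $\mathsf{Z}=\ker(\mathsf{V}^*\to\mathsf{W}^*)$; these invariant superalgebras are visibly Grassman graded, so Remark \ref{graded} yields both isomorphisms chart by chart, with compatible gluing (and $(\mathbb{V}/\mathbb{H})_{ev}\simeq\mathrm{SSp}(\mathcal{O}(\mathsf{U})^{\mathcal{O}(\mathsf{H})})$ gives the even statement). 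This is essentially the ``cover in the even direction'' your last paragraph gestures at, but producing it requires the saturated-chart structure theory, not fppf descent. Two of your side remarks do survive: the reduction of the second isomorphism to the first via $(\mathsf{gr}(Y))_{ev}\simeq Y_{ev}$ is a valid and pleasant shortcut once the first isomorphism is known, and the semidirect-product description from Proposition \ref{refine} correctly describes the graded case; but both inherit the unproven first isomorphism, so neither repairs the argument.
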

\begin{proof}
We use again the language of $\Bbbk$-functors and follow the steps in the proof of \cite[Theorem 14.1]{maszub3}. Set also $\Phi(\mathbb{H})=(\mathsf{H}, \mathsf{W})$.

Observe that if $\Phi(\mathbb{G})=(\mathsf{G}, \mathsf{V})$, then $\Phi(\mathsf{gr}(\mathbb{G}))
=(\mathsf{G}, \mathsf{V})$ as well, with the only difference that the bilinear map $\mathsf{V}\times\mathsf{V}\to\mathrm{Lie}(\mathsf{G})$ is the zero map. Besides, for any group superscheme morphism ${\bf f}:  \mathbb{G}\to \mathbb{H}$ there is $\Phi({\bf f})=\Phi(\mathsf{gr}({\bf f}))$. By \cite[Theorem 12.11]{maszub3} our proposition follows for any normal $\mathbb{H}$. If $\mathbb{L}$ is a normal closed group super-subscheme of $\mathbb{G}$, such that $\mathbb{L}\leq\mathbb{H}$ and our proposition holds for $\mathbb{G}/\mathbb{L}$ and $\mathbb{H}/\mathbb{L}$, then
\[
\begin{split}
\mathsf{gr}(\mathbb{G}/\mathbb{H})&\simeq \mathsf{gr}((\mathbb{G}/\mathbb{L})/(\mathbb{H}/\mathbb{L}))\simeq \mathsf{gr}(\mathbb{G}/\mathbb{L})/\mathsf{gr}(\mathbb{H}/\mathbb{L})\\
&\simeq
(\mathsf{gr}(\mathbb{G})/\mathsf{gr}(\mathbb{L}))/(\mathsf{gr}(\mathbb{H})/\mathsf{gr}(\mathbb{L}))\simeq \mathsf{gr}(\mathbb{G})/\mathsf{gr}(\mathbb{H}). 
\end{split}
\]
Arguing as in \cite[Lemma 14.2]{maszub3}, one can reduce the general case to the case when $\mathbb{G}$ and $\mathbb{H}$ satisfy the following conditions:
\begin{enumerate}
\item[(a)] $\mathsf{G}=\mathsf{M}\times\mathsf{A}(\mathsf{G})$, where $\mathsf{A}(\mathsf{G})=\ker(\mathsf{G}\to \mathsf{G}^{aff})$ is a central anti-affine group subscheme of $\mathsf{G}$ (cf. \cite[Corollary 8.14 and Proposition 8.37]{milne});
\item[(b)] $\mathsf{M}$ is an affine group subscheme of $\mathsf{G}$ such that $\mathsf{H}\leq\mathsf{M}$.
\end{enumerate}
On can choose a covering of $\mathsf{G}$ by open affine $\mathsf{H}$-saturated subschemes $\mathsf{U}=\mathsf{U}_{aff}\times \mathsf{U}_{ab}$, such that
$\mathsf{U}_{aff}$ form an open covering of $\mathsf{M}$ and $\mathsf{U}_{aff}/\mathsf{H}$ form an covering of $\mathsf{M}/\mathsf{H}$ by open affine subschemes. Thus $\mathsf{U}/\mathsf{H}$ form an open covering of $\mathsf{G}/\mathsf{H}$ by open affine subschemes.
Next, each $\mathbb{V}=\mathsf{U}\times\mathrm{SSpec}(\mathsf{V}^*) $ is an open affine $\mathbb{H}$-stable (or $\mathbb{H}$-saturated) super-subscheme of $\mathbb{G}$ (see \cite{maszub3}, Lemma 14.5). The superscheme $\mathbb{G}/\mathbb{H}$ is covered by open affine super-subschemes $\mathbb{V}/\mathbb{H}$ and similarly, $\mathsf{gr}(\mathbb{G})/\mathsf{gr}(\mathbb{H})$ is covered by open affine super-subschemes $\mathsf{gr}(\mathbb{V})/\mathsf{gr}(\mathbb{H})$.

As a right $\mathcal{O}(\mathbb{H})$-coideal superalgebra $\mathcal{O}(\mathbb{V})$ is isomorphic to the cotensor product $(\mathcal{O}(\mathsf{U})\otimes\Lambda(\mathsf{Z}))\square_{\mathcal{O}(\mathsf{H})}\mathcal{O}(\mathbb{H})$ of the right and left $\mathcal{O}(\mathsf{H})$-coideal superalgebras $\mathcal{O}(\mathsf{U})\otimes\Lambda(\mathsf{Z})$ and $\mathcal{O}(\mathbb{H})$ respectively, where $\mathsf{Z}=\ker(\mathsf{V}^*\to \mathsf{W}^*)$. In particular, the superalgebra $\mathcal{O}(\mathbb{V}/\mathbb{H})\simeq \mathcal{O}(\mathbb{V})^{\mathcal{O}(\mathbb{H})}$ is isomorphic to $(\mathcal{O}(\mathsf{U})\otimes\Lambda(\mathsf{Z}))^{\mathcal{O}(\mathsf{H})}$. For another open affine subscheme
$\mathsf{U}'=\mathsf{U}'_{aff}\times\mathsf{U}'_{ab}$ and for the corresponding open affine $\mathbb{H}$-saturated super-subscheme
$\mathbb{V}'=\mathsf{U}'\times\mathrm{SSpec}(\mathsf{V}^*) $ we have
\[(\mathbb{V}\cap\mathbb{V}')/\mathbb{H}= \mathbb{V}/\mathbb{H}\cap \mathbb{V}'/\mathbb{H}\]
This open super-subcheme is glued from the open affine super-subschemes $\mathbb{T}/\mathbb{H}$, where
$\mathbb{T}=\mathsf{T}\times\mathrm{SSpec}(\mathsf{V}^*)$ and the $\mathsf{H}$-saturated affine subschemes $\mathsf{T}$ form an open covering of $\mathsf{U}\cap\mathsf{U}'$. In other words, the superscheme structure of $\mathbb{G}/\mathbb{H}$ is uniquely defined by the family
of superalgebras $(\mathcal{O}(\mathsf{U})\otimes\Lambda(\mathsf{Z}))^{\mathcal{O}(\mathsf{H})}$ and by the family of superalgebra morphisms $(\mathcal{O}(\mathsf{U})\otimes\Lambda(\mathsf{Z}))^{\mathcal{O}(\mathsf{H})}\to (\mathcal{O}(\mathsf{T})\otimes\Lambda(\mathsf{Z}))^{\mathcal{O}(\mathsf{H})}$, which are dual to the open embeddings
$\mathbb{T}/\mathbb{H}\to \mathbb{V}/\mathbb{H}$.

Similarly, $\mathcal{O}(\mathsf{gr}(\mathbb{V}))\simeq (\mathcal{O}(\mathsf{U})\otimes\Lambda(\mathsf{Z}))\square_{\mathcal{O}(\mathsf{H})}\mathsf{gr}(\mathcal{O}(\mathbb{H}))$ and
$\mathcal{O}(\mathsf{gr}(\mathbb{V})/\mathsf{gr}(\mathbb{H}))\simeq (\mathcal{O}(\mathsf{U})\otimes\Lambda(\mathsf{Z}))^{\mathcal{O}(\mathsf{H})}$ (see \cite[Lemma 14.11]{maszub3}). It remains to note that all superalgebras
$(\mathcal{O}(\mathsf{U})\otimes\Lambda(\mathsf{Z}))^{\mathcal{O}(\mathsf{H})}$ and $(\mathcal{O}(\mathsf{T})\otimes\Lambda(\mathsf{Z}))^{\mathcal{O}(\mathsf{H})}$ are Grassman graded and use Remark \ref{graded}.

Finally, by \cite[Theorem 12.11]{maszub3} the second statement obviously holds, whenever $\mathbb{H}$ is normal. Again, arguing as in \cite[Lemma 14.2]{maszub3}, one can assume that $\mathbb{G}$ and $\mathbb{H}$ satisfy the above conditions (a) and (b). In particular, one can construct a covering of $\mathbb{G}/\mathbb{H}$ by open affine super-subschemes $\mathbb{V}/\mathbb{H}$ as above.

Thus $(\mathbb{V}/\mathbb{H})_{ev}\simeq\mathrm{SSp}(\overline{(\mathcal{O}(\mathsf{U})\otimes\Lambda(\mathsf{Z}))^{\mathcal{O}(\mathsf{H})}})\simeq \mathrm{SSp}(\mathcal{O}(\mathsf{U})^{\mathcal{O}(\mathsf{H})})$ and each open embedding $(\mathbb{T}/\mathbb{H})_{ev}\to (\mathbb{V}/\mathbb{H})_{ev}$ is dual to the algebra morphism $\mathcal{O}(\mathsf{U})^{\mathcal{O}(\mathsf{H})}\to \mathcal{O}(\mathsf{T})^{\mathcal{O}(\mathsf{H})}$. In other words, gluing together the purely even superschemes $(\mathbb{V}/\mathbb{H})_{ev}$ one obtains $\mathbb{G}_{ev}/\mathbb{H}_{ev}$. Proposition is proven.
\end{proof}

\section{Super-dimension of certain Noetherian superschemes}

Recall that if $X$ is an irreducible superscheme of finite type over a field $\Bbbk$, then its super-dimension is defined as the Krull super-dimension $\mathrm{Ksdim}(\mathcal{O}(U))$ of
the superalgebra of global sections (coordinate superalgebra) of any open (nonempty) affine super-subcheme $U$ in $X$ (see \cite{maszub}, Section 6.1). By analogy with this, we define the super-dimension of a Noetherian superscheme $X$ with $\dim(X_{res})<\infty$ as
\[
\mathrm{sdim}(X)=\sup_{U} \mathrm{Ksdim}(\mathcal{O}(U)), 
\] 
where $U$ runs over a covering of $X$ by open (nonempty) affine super-subschemes and the supremum is given with respect to the lexicographical order on couples of nonnegative integers (see \cite{maszub}, Section 5.3).
\begin{lm}\label{affine case}
If $X$ is affine, then for any covering of $X$ by open (nonempty) affine super-subschemes $U$ we have
\[  \sup_{U} \mathrm{Ksdim}(\mathcal{O}(U))=\mathrm{Ksdim}(\mathcal{O}(X)). \] 	
\end{lm}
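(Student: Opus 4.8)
The plan is to write $R=\mathcal{O}(X)$, so that $X=\mathrm{SSpec}(R)$ with $R_0$ Noetherian, $\mathrm{Kdim}(R_0)=n<\infty$ and $R_1$ a finitely generated $R_0$-module, and to prove the equality by two opposite inequalities for the lexicographic order, treating the even and odd components in tandem. The computational engine will be the following elementary dictionary. For odd elements $y_1,\dots,y_k\in R_1$ and a prime $\mathfrak{p}$ of $R_0$ one has $\mathrm{Ann}_{R_0}(y_1\cdots y_k)\subseteq\mathfrak{p}$ if and only if the image of $y_1\cdots y_k$ in the localization $(R_1)_{\mathfrak{p}}$ is nonzero; indeed $y_1\cdots y_k$ dies in $(R_1)_{\mathfrak{p}}$ exactly when some $s\notin\mathfrak{p}$ annihilates it. Moreover, for any affine open $U\subseteq X$ the even part $\mathcal{O}(U)_0$ is the coordinate ring of the open subscheme $U^e\hookrightarrow\mathrm{Spec}(R_0)$ (so points of $U^e$ are primes of $R_0$ and chains in $\mathcal{O}(U)_0$ contract faithfully to chains in $R_0$ of the same length), and for $\mathfrak{p}\in U^e$ the stalk is unchanged, $(\mathcal{O}(U)_1)_{\mathfrak{p}}=(R_1)_{\mathfrak{p}}$. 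Thus a product of odd sections over $U$ has its annihilator inside a point $\mathfrak{p}$ of $U^e$ precisely when its germ in $(R_1)_{\mathfrak{p}}$ is nonzero.

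For the upper bound I would fix an arbitrary affine open $U$ and show $\mathrm{Ksdim}(\mathcal{O}(U))\le\mathrm{Ksdim}(R)$. Since $U^e$ is an open subscheme of $\mathrm{Spec}(R_0)$ we have $\mathrm{Ksdim}_0(\mathcal{O}(U))=\dim U^e\le n$, which already settles the case $\dim U^e<n$ in lexicographic order. If $\dim U^e=n$, take a longest system of odd parameters $z_1,\dots,z_m$ of $\mathcal{O}(U)$, realized along a longest chain $\mathfrak{q}_0\subsetneq\cdots\subsetneq\mathfrak{q}_n$ of $\mathcal{O}(U)_0$ with $\mathrm{Ann}(z_1\cdots z_m)\subseteq\mathfrak{q}_0$. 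Contracting gives a chain $\mathfrak{p}_0\subsetneq\cdots\subsetneq\mathfrak{p}_n$ in $R_0$ of length $n$, hence a longest one, and by the dictionary the germ of $z_1\cdots z_m$ at $\mathfrak{p}_0$ is nonzero in $(R_1)_{\mathfrak{p}_0}$; writing these germs as $r_i/s_i$ with $r_i\in R_1$, $s_i\notin\mathfrak{p}_0$, I get $r_1\cdots r_m\neq0$ in $(R_1)_{\mathfrak{p}_0}$, i.e.\ $\mathrm{Ann}_{R_0}(r_1\cdots r_m)\subseteq\mathfrak{p}_0$. So $r_1,\dots,r_m$ is a system of odd parameters of $R$ and $m\le\mathrm{Ksdim}_1(R)$. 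Therefore $\mathrm{Ksdim}(\mathcal{O}(U))\le\mathrm{Ksdim}(R)$ for every $U$, and hence $\sup_U\mathrm{Ksdim}(\mathcal{O}(U))\le\mathrm{Ksdim}(R)$.

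For the lower bound I would produce a single member of the given cover attaining $\mathrm{Ksdim}(R)$. Choose a longest system of odd parameters $y_1,\dots,y_k$ of $R$, with $k=\mathrm{Ksdim}_1(R)$, together with a longest chain $\mathfrak{p}_0\subsetneq\cdots\subsetneq\mathfrak{p}_n$ of $R_0$ such that $\mathrm{Ann}_{R_0}(y_1\cdots y_k)\subseteq\mathfrak{p}_0$. The cover contains some $U_\alpha$ whose underlying space contains the \emph{top} prime $\mathfrak{p}_n$. Because open subsets of a spectrum are stable under generization and $\mathfrak{p}_i\subseteq\mathfrak{p}_n$ for all $i$, the whole chain lies in $U_\alpha^e$; consequently $\dim U_\alpha^e=n$ and $\mathfrak{p}_0\subsetneq\cdots\subsetneq\mathfrak{p}_n$ is a longest chain of $\mathcal{O}(U_\alpha)_0$. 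Since $\mathrm{Ann}_{R_0}(y_1\cdots y_k)\subseteq\mathfrak{p}_0$, the dictionary gives that the germ of $y_1\cdots y_k$ at $\mathfrak{p}_0$ is nonzero, so the restrictions $y_1|_{U_\alpha},\dots,y_k|_{U_\alpha}$ form a system of odd parameters of $\mathcal{O}(U_\alpha)$ along this chain. Hence $\mathrm{Ksdim}(\mathcal{O}(U_\alpha))\ge(n,k)=\mathrm{Ksdim}(R)$, which together with the upper bound forces equality and $\sup_U\mathrm{Ksdim}(\mathcal{O}(U))=\mathrm{Ksdim}(R)$.

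The main obstacle is exactly this last step: an arbitrary affine cover need not contain a principal open, and a member containing the minimal prime $\mathfrak{p}_0$ (which alone witnesses the odd dimension) need not realize the full even dimension $n$, since open sets are not stable under specialization. The key device is therefore to pick the cover element through the \emph{top} of the chain and invoke generization-stability, which drags the entire chain into $U_\alpha^e$ and realizes the even and odd data simultaneously; this argument is purely topological and uses neither catenarity nor finiteness of type, so it applies to the general Noetherian $R_0$ at hand. (The bare even-part statement $\sup_\alpha\dim U_\alpha^e=n$ is the classical fact that dimension is the supremum of the dimensions over an open cover, and is subsumed by the above.)
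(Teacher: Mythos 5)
Your proposal is correct, and its decisive trick is exactly the paper's: to realize the even and odd data simultaneously inside one member of the cover, you pick the member whose underlying space contains the \emph{maximal} prime of a longest chain witnessing the odd parameters, and use stability of open sets under generization to drag the whole chain (and hence the annihilator condition at its bottom) into that member. In the paper this same step reads: choose $a_i\notin\mathfrak{p}_d$, so that $(\mathfrak{p}_0)_{a_i}\subseteq\cdots\subseteq(\mathfrak{p}_d)_{a_i}$ is a longest chain of $(A_0)_{a_i}$. The packaging, however, differs. The paper first reduces, via \cite[Lemma 3.5]{maszub2}, to finite coverings of $X$ by principal open super-subschemes $\mathrm{SSpec}(A_{a_i})$, and then argues purely ring-theoretically, using the identity $\mathrm{Ann}_{(A_0)_a}(y_1\cdots y_t)=(\mathrm{Ann}_{A_0}(y_1\cdots y_t))_a$ in both directions (clearing denominators $y'_j=z_j/a_i^{k_j}$ for the reverse inequality), with a separate remark for the case $\mathrm{Ksdim}_1(A)=0$. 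You avoid that reduction entirely and work with an arbitrary affine open $U$ through the sheaf-theoretic dictionary: $\mathcal{O}(U)_0$ is the coordinate ring of the open subscheme $U^e\subseteq\mathrm{Spec}(R_0)$, stalks are unchanged, and $\mathrm{Ann}_{R_0}(y_1\cdots y_k)\subseteq\mathfrak{p}$ if and only if the germ of $y_1\cdots y_k$ at $\mathfrak{p}$ is nonzero (modulo the cosmetic point that for even $k$ this product lies in $R_0$, so its germ should be read in the full local superalgebra $R_{\mathfrak{P}}$ rather than in $(R_1)_{\mathfrak{p}}$). What your route buys is independence from the structure lemma for affine opens and from any passage to finite principal coverings, plus a uniform treatment of $\mathrm{Ksdim}_1=0$; what the paper's route buys is that every computation happens inside explicit localizations $A_a$, where the annihilator identity is immediate and no identification of stalks or contraction of chains needs to be verified. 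Beyond this, the two arguments are step-for-step parallel: upper bound by contracting chains and clearing denominators, lower bound by the top-of-the-chain device.
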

\begin{proof}
Let $A$ denote $\mathcal{O}(X)$ and let $d$ denote $\dim(X_{res})$. Note that any affine open super-subscheme $U$ of $X$ has a form $\cup_{i\in I}\mathrm{SSpec}(A_{a_i})$, where
$I$ is a finite set, each element $a_i$ belongs to $A_0$ and $\sum_{i\in I} \mathcal{O}(U)_0 a_i|_U=\mathcal{O}(U)_0$ (cf. \cite[Lemma 3.5]{maszub2}). Thus all we need is to prove our lemma for any such finite covering of $X$.

So, let $X=\cup_{1\leq i\leq k}\mathrm{SSpec}(A_{a_i})$, where $\sum_{1\leq i\leq k}A_0a_i=A_0$. Since the underlying topological space of $X$ coincides with the underlying topological space of the scheme $X_{res}=\mathrm{Spec}(\overline{A})$, \cite[Exercise I.1.10]{hart} implies
\[\max_{1\leq i\leq k} \mathrm{Ksdim}_0(A_{a_i})=\mathrm{Ksdim}_0(A)=d.\]
Let $J$ denote the set $\{i\mid \mathrm{Ksdim}_0(A_{a_i})=d \}$.
It remains to show that
\[ 
\max_{i\in J} \mathrm{Ksdim}_1(A_{a_i})=\mathrm{Ksdim}_1(A). 
\]
If $\mathrm{Ksdim}_1(A)=t>0$, then let $y_1, \ldots, y_t\in A_1$ be a corresponding longest system of odd parameters. Choose a longest prime chain $\mathfrak{p}_0\subseteq \cdots\subseteq \mathfrak{p}_d$ in $A_0$ such that $\mathrm{Ann}_{A_0}(y_1\cdots y_t)\subseteq\mathfrak{p}_0$.
There is an index $i$ such that $a=a_i$ does not belong to $\mathfrak{p}_d$, hence
\[ 
(\mathfrak{p}_0)_a\subseteq \cdots\subseteq (\mathfrak{p}_d)_a 
\]
is the longest prime chain in $A_a$, and thus $i\in J$. Moreover, there is
\[\mathrm{Ann}_{(A_0)_a}(y_1\cdots y_t)=(\mathrm{Ann}_{A_0}(y_1\cdots y_t))_a\subseteq (\mathfrak{p}_0)_a  \]
and the latter inclusion implies $\mathrm{Ksdim}_1(A_{a})\geq \mathrm{Ksdim}_1(A)$.

Conversely, for any $i\in J$ there is a longest prime chain $\mathfrak{q}_0\subseteq \cdots\subseteq \mathfrak{q}_d$ in $A_0$
such that $(\mathfrak{q}_0)_{a_i}\subseteq\cdots \subseteq (\mathfrak{q}_d)_{a_i}$ is the longest prime chain in $A_{a_i}$.
If
\[
\textstyle 
y'_1=\frac{z_1}{a_i^{k_1}}, \quad \ldots, \quad y'_s=\frac{z_s}{a_i^{k_s}}
\] 
is the longest system of odd parameters in $A_{a_i}$ that subordinates
$(\mathfrak{q}_0)_{a_i}$, then again
\[ \mathrm{Ann}_{(A_0)_{a_i}}(y'_1\cdots y'_s)=(\mathrm{Ann}_{A_0}(z_1\cdots z_s))_{a_i}\subseteq (\mathfrak{q}_0)_{a_i} \]
implies  $\mathrm{Ann}_{A_0}(z_1\cdots z_s)\subseteq\mathfrak{q}_0$ and $\mathrm{Ksdim}_1(A_{a_i})\leq \mathrm{Ksdim}_1(A)$.

Finally, $\mathrm{Ksdim}_1(A)=0$ if and only if $\mathrm{Ann}_{A_0}(A_1)$ do not subordinate the first member of any longest prime chain in $A$. The same arguments as above show that $\mathrm{Ksdim}_1(A_{a_i})=0$ for any $i\in J$. Lemma is proven.
\end{proof}
\begin{pr}\label{well defined}
The definition of $\mathrm{sdim}(X)$ does not depend on the choice of covering of $X$ by affine open super-subschemes. 	
\end{pr}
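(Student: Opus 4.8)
The plan is to reduce everything to the affine situation already settled in Lemma \ref{affine case} by passing to a common refinement. The one extra ingredient I need is a monotonicity statement: if $W$ is an open affine super-subscheme of an affine superscheme $Y$, then $\mathrm{Ksdim}(\mathcal{O}(W))\leq \mathrm{Ksdim}(\mathcal{O}(Y))$ in the lexicographic order. To obtain this, I would adjoin to the single-member family $\{W\}$ a collection of basic open affine super-subschemes $\mathrm{SSpec}((\mathcal{O}(Y))_b)$, with $b\in\mathcal{O}(Y)_0$, that already cover $Y$. This produces a covering of $Y$ by open affine super-subschemes, so Lemma \ref{affine case} gives that the supremum of $\mathrm{Ksdim}(\mathcal{O}(-))$ over this covering equals $\mathrm{Ksdim}(\mathcal{O}(Y))$; since $W$ belongs to the family, $\mathrm{Ksdim}(\mathcal{O}(W))\leq\mathrm{Ksdim}(\mathcal{O}(Y))$.

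Next I would take two coverings $\{U_i\}_{i\in I}$ and $\{V_j\}_{j\in J}$ of $X$ by open affine super-subschemes and prove $\sup_i\mathrm{Ksdim}(\mathcal{O}(U_i))\leq\sup_j\mathrm{Ksdim}(\mathcal{O}(V_j))$, the reverse inequality then following by symmetry. Fix $i$. The open sets $U_i\cap V_j$ cover $U_i$, and because $U_i$ is affine each $U_i\cap V_j$ is itself covered by basic open affine super-subschemes of $U_i$, that is, by super-subschemes of the form $W=\mathrm{SSpec}((\mathcal{O}(U_i))_a)$ with $a\in\mathcal{O}(U_i)_0$ and $W\subseteq V_j$ for some $j=j(W)$. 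Collecting these over all $j$ yields a covering of $U_i$ by basic open affine super-subschemes, each contained in some $V_{j(W)}$.

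Applying Lemma \ref{affine case} to $U_i$ with this covering gives $\mathrm{Ksdim}(\mathcal{O}(U_i))=\sup_W\mathrm{Ksdim}(\mathcal{O}(W))$. Each such $W$ is an open affine super-subscheme of the affine superscheme $V_{j(W)}$, carrying the coordinate superalgebra $(\mathcal{O}(U_i))_a$, so the monotonicity established above yields $\mathrm{Ksdim}(\mathcal{O}(W))\leq\mathrm{Ksdim}(\mathcal{O}(V_{j(W)}))\leq\sup_j\mathrm{Ksdim}(\mathcal{O}(V_j))$. Taking the supremum over $W$ gives $\mathrm{Ksdim}(\mathcal{O}(U_i))\leq\sup_j\mathrm{Ksdim}(\mathcal{O}(V_j))$, and then the supremum over $i$ produces the desired inequality.

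The only point requiring care is that the intersections $U_i\cap V_j$ need not themselves be affine, which is exactly why the refinement must be carried out with basic open affine super-subschemes of $U_i$ rather than with the intersections directly; this is also where the full generality of Lemma \ref{affine case} is used, namely that it applies to arbitrary affine open coverings and not merely to finite coverings by distinguished opens. I expect this bookkeeping, together with the verification that a basic open of $U_i$ lying inside $V_j$ is genuinely an open affine super-subscheme of $V_j$ with coordinate superalgebra $(\mathcal{O}(U_i))_a$, to be the main (though entirely routine) obstacle.
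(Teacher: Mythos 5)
Your argument is correct, and it turns on the same key ingredient as the paper's proof, namely Lemma \ref{affine case}, but the bookkeeping is genuinely different. The paper argues symmetrically: for each pair $(i,j)$ it picks a finite covering of $U_i\cap V_j$ by open affine super-subschemes $Z_{ijt}$; for fixed $i$ the family $\{Z_{ijt}\}_{j,t}$ is an affine open covering of the affine superscheme $U_i$, while for fixed $j$ the family $\{Z_{ijt}\}_{i,t}$ is one of $V_j$, so two applications of Lemma \ref{affine case} identify $\sup_{i,j,t}\mathrm{Ksdim}(\mathcal{O}(Z_{ijt}))$ with both $\sup_i\mathrm{Ksdim}(\mathcal{O}(U_i))$ and $\sup_j\mathrm{Ksdim}(\mathcal{O}(V_j))$, and no inequality is ever needed. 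You instead prove one inequality at a time, which forces you to compare $\mathrm{Ksdim}(\mathcal{O}(W))$ with $\mathrm{Ksdim}(\mathcal{O}(V_{j(W)}))$ for $W$ an affine open inside $V_{j(W)}$, i.e. to establish a monotonicity lemma; your derivation of it from Lemma \ref{affine case} is valid, though it can be obtained with no basic opens at all, since $\{W,\,Y\}$ is already a covering of $Y$ by (nonempty) open affine super-subschemes. Likewise your refinement of $U_i$ need not consist of basic opens: any affine opens of $U_i$, each contained in some $V_j$, would do (this is exactly what the paper's $Z_{ijt}$ are), so the basic-open machinery is a convenience rather than a necessity, and the point you worry about most --- that $U_i\cap V_j$ need not be affine --- is dodged by both proofs in the same way, by refining it further. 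What your route buys is a reusable monotonicity statement for $\mathrm{Ksdim}$ under passage to affine opens; what the paper's symmetric refinement buys is brevity, the whole proof being two displayed chains of equalities.
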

\begin{proof}
Assume that $X$ has a covering by open affine super-subschemes $U_i, i\in I$, and by open afine super-subschemes $V_j, j\in J,$ as well.
For any couple of indices $(i, j)\in I\times J$ we choose a finite covering by open affine super-subschemes $Z_{i j t}, t\in T_{ij}$.
Then Lemma \ref{affine case} implies
\[
\begin{split}
\sup_{(i, j)\in I\times J, t\in T_{ij}} \mathrm{Ksdim}(\mathcal{O}(Z_{i j t}))&=\sup_{i\in I}(\sup_{j\in J, t\in T_{ij}}\mathrm{Ksdim}(\mathcal{O}(Z_{i j t})) )\\
&=\sup_{i\in I}\mathrm{Ksdim}(\mathcal{O}(U_i))  
\end{split}
\]
and symmetrically
\[
\begin{split}
\sup_{(i, j)\in I\times J, t\in T_{ij}} \mathrm{Ksdim}(\mathcal{O}(Z_{i j t}))&=\sup_{j\in J}(\sup_{i\in I, t\in T_{ij}}\mathrm{Ksdim}(\mathcal{O}(Z_{i j t})) )\\
&=\sup_{j\in J}\mathrm{Ksdim}(\mathcal{O}(V_j)).  
\end{split}
\]
\end{proof}
\begin{example}\label{algebraic group}
Let $G$ be an algebraic group superscheme. Then
\[\mathrm{sdim}(G)=(\dim(G_{res}), \dim(\mathrm{Lie}(G)_1)).\]	
As it has been already mentioned, the superscheme $G$ is isomorphic to $G_{res}\times \mathrm{SSpec}(\Lambda(\mathsf{V}^*))$, where
$\mathsf{V}=\mathrm{Lie}(G)_1$. Furthermore, if the open affine subschemes $U$ form a covering of $G_{res}$, then the open affine super-subschemes $W=U\times \mathrm{SSpec}(\Lambda(\mathsf{V}^*))$ form a covering of $G$. It remains to note that any basis of
$\mathsf{V}^*$ form a longest system of odd parameters of $\mathcal{O}(W)\simeq\mathcal{O}(U)\otimes\Lambda(\mathsf{V}^*)$, hence
\[\mathrm{Ksdim}(\mathcal{O}(W))=(\mathrm{Kdim}(\mathcal{O}(U)), \dim(\mathrm{Lie}(G)_1)).  \]
\end{example}
\begin{rem}\label{base extension}
If $X$ is an Noetherian superscheme with $\dim(X_{res})<\infty$, then for any field extension $\Bbbk \subseteq \Bbbk'$ there is
\[\mathrm{Ksdim}(X)=\mathrm{Ksdim}(X_{\Bbbk'}), \]	
where $X_{\Bbbk'}\simeq X\times_{\mathrm{SSpec}(\Bbbk)} \mathrm{SSpec}(\Bbbk')$. Indeed, without loss of a generality one can suppose that $X$ is affine, say $X\simeq\mathrm{SSpec}(A)$. Then $X_{\Bbbk'}\simeq \mathrm{SSpec}(A\otimes_{\Bbbk} \Bbbk')$ and it is obvious that
\[\mathrm{Ksdim}(A)=\mathrm{Ksdim}(A\otimes_{\Bbbk} \Bbbk').  \]
\end{rem}

\section{Super-dimension of a sheaf quotient}

Let $G$ be an algebraic group superscheme and $H$ be its closed group super-subscheme. Recall that $G$ corresponds to the Harisgh-Chandra pair $(G_{ev}, \mathsf{V})$,  where $\mathsf{V}=\mathrm{Lie}(G)_1$ is regarded as a $G_{ev}$-module with respect to the adjoint action, and the bilinear $G_{ev}$-invariant map $\mathsf{V}\times \mathsf{V}\to \mathrm{Lie}(G_{ev})=\mathrm{Lie}(G)_0$ is induced by the Lie superalgebra bracket.

\begin{tr}\label{super-dimension of a quotient}
We have $\mathrm{sdim}(G/H)=\mathrm{sdim}(G)-\mathrm{sdim}(H)$.	
\end{tr}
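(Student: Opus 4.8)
The plan is to treat the two entries of the couple $\mathrm{sdim}=(\mathrm{Ksdim}_0,\mathrm{Ksdim}_1)$ separately, since subtraction of super-dimensions is componentwise in the lexicographic order. The even entry will be handled by the classical dimension theory of homogeneous spaces, while the odd entry will be extracted after passing to the associated graded superscheme, where the local coordinate superalgebras become Grassman graded and the odd parameters can be read off directly. Throughout I write $\mathsf{V}=\mathrm{Lie}(G)_1$ and $\mathsf{W}=\mathrm{Lie}(H)_1$, so that by Example \ref{algebraic group} one has $\mathrm{sdim}(G)=(\dim(G_{res}),\dim\mathsf{V})$ and $\mathrm{sdim}(H)=(\dim(H_{res}),\dim\mathsf{W})$.

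For the even entry I would invoke the isomorphism $(G/H)_{ev}\simeq G_{ev}/H_{ev}$ from Proposition \ref{graded quotient}. Regarding both sides as ordinary schemes gives $(G/H)_{res}\simeq G_{res}/H_{res}$, and the classical formula for the dimension of a quotient of algebraic groups (as in \cite{milne}) yields $\dim(G_{res}/H_{res})=\dim(G_{res})-\dim(H_{res})$. Since $\mathrm{Ksdim}_0$ of an affine piece computes the Krull dimension of its underlying space, the definition of $\mathrm{sdim}$ gives $\mathrm{Ksdim}_0(G/H)=\dim((G/H)_{res})$, and combining with Example \ref{algebraic group} the even entries add up as required.

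For the odd entry I would reduce to the graded case. By Proposition \ref{graded quotient} we have $\mathsf{gr}(G/H)\simeq\mathsf{gr}(G)/\mathsf{gr}(H)$, and since $\mathsf{gr}(G)$ and $G$ share the same $G_{ev}$ and the same odd Lie space $\mathsf{V}$ (the passage to $\mathsf{gr}$ only kills the Harish-Chandra bracket), Example \ref{algebraic group} gives $\mathrm{sdim}(\mathsf{gr}(G))=\mathrm{sdim}(G)$ and likewise for $H$. Thus, granting the invariance $\mathrm{sdim}(X)=\mathrm{sdim}(\mathsf{gr}(X))$ for Noetherian $X$, the theorem reduces to $\mathrm{sdim}(\mathsf{gr}(G)/\mathsf{gr}(H))=\mathrm{sdim}(\mathsf{gr}(G))-\mathrm{sdim}(\mathsf{gr}(H))$. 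Here I would use, exactly as in the proof of Proposition \ref{graded quotient}, that $\mathsf{gr}(G)/\mathsf{gr}(H)$ is covered by open affines whose coordinate superalgebras are the Grassman graded superalgebras $(\mathcal{O}(\mathsf{U})\otimes\Lambda(\mathsf{Z}))^{\mathcal{O}(\mathsf{H})}$ with $\mathsf{Z}=\ker(\mathsf{V}^*\to\mathsf{W}^*)$; as $\mathsf{W}$ embeds in $\mathsf{V}$, the dual map is surjective and $\dim\mathsf{Z}=\dim\mathsf{V}-\dim\mathsf{W}$. It remains to see that for such a Grassman graded superalgebra the odd Krull dimension equals the generic rank $\dim\mathsf{Z}$ of its degree-one part, which I would obtain by checking these local rings are oddly regular and applying Lemma \ref{oddly regular} to get $\mathrm{Ksdim}_1=\dim_{\kappa}\Phi=\dim\mathsf{Z}$.

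The main obstacle is the invariance $\mathrm{Ksdim}(A)=\mathrm{Ksdim}(\mathsf{gr}(A))$, specifically its odd component. The even component is harmless: since $A_1$ is a finitely generated $A_0$-module and each odd element squares to zero, the ideal $A_1^2$ is nilpotent and $I_A^{d+1}=0$ for $d=\dim_\kappa\Phi_A$, so every positive-degree piece of $\mathsf{gr}(A)$ is nilpotent and both even dimensions coincide with the Krull dimension of the common underlying space $X^e$. The odd component is delicate because the invariant "longest system of odd parameters" is phrased through annihilators in $A_0$ and through longest prime chains, and it is not a priori preserved by the filtration $\oplus_n I_A^n/I_A^{n+1}$. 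I expect to settle this by comparing annihilators across the (nilpotent) filtration, lifting a longest system of odd parameters from $\mathsf{gr}(A)$ to $A$ and conversely; alternatively one can bypass the general invariance and argue directly on the non-graded local model $(\mathcal{O}(\mathsf{U})\otimes\Lambda(\mathsf{Z}))^{\mathcal{O}(\mathbb{H})}$ of $G/H$, proving it oddly regular with $\dim_{\kappa}\Phi=\dim\mathsf{Z}$ by exploiting the faithful flatness of the quotient morphism $G\to G/H$.
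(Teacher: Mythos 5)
Your even-component argument is fine and coincides with the paper's (Proposition \ref{graded quotient} plus the classical formula from \cite{milne}). The problem is the odd component. Your primary route rests on the invariance $\mathrm{Ksdim}_1(A)=\mathrm{Ksdim}_1(\mathsf{gr}(A))$, which you correctly flag as the main obstacle but do not prove, and it is not a formality that can be waved through by ``comparing annihilators across the filtration.'' If $y_1,\ldots,y_k\in A_1$ is a system of odd parameters of $A$, the annihilator of the product of the images $\bar y_1\cdots\bar y_k$ in $\mathsf{gr}(A)$ is $\{a\in A_0:\, a\,y_1\cdots y_k\in I_A^{k+1}\}$ (taken modulo higher filtration), which can be strictly larger than $\mathrm{Ann}_{A_0}(y_1\cdots y_k)$, so a longest system of odd parameters of $A$ need not remain one in $\mathsf{gr}(A)$; in the other direction, $\mathsf{gr}(A)_1$ contains homogeneous elements of every odd degree, not just degree one, so a system of odd parameters of $\mathsf{gr}(A)$ does not obviously lift to $A$ either. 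Neither inequality is established, and nothing in the paper (nor in \cite{maszub}) is quoted that would supply it.

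There is a second gap even if one grants the reduction to $\mathsf{gr}(G)/\mathsf{gr}(H)$: you assert it ``remains to see'' that the local rings of $(\mathcal{O}(\mathsf{U})\otimes\Lambda(\mathsf{Z}))^{\mathcal{O}(\mathsf{H})}$ are oddly regular with $\dim_{\kappa}\Phi=\dim\mathsf{Z}$, but the $\mathcal{O}(\mathsf{H})$-coaction here is diagonal on the two tensor factors, so the invariant superalgebra is a twisted object, not visibly $\mathcal{O}(\mathsf{U})^{\mathcal{O}(\mathsf{H})}\otimes\Lambda(\mathsf{Z})$; verifying odd regularity of its stalks requires descending along the faithfully flat quotient morphism, which is precisely the machinery you were trying to avoid. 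The paper's proof bypasses $\mathsf{gr}$ altogether: it picks a prime superideal $\mathfrak{P}$ of $A=\mathcal{O}(\mathbb{V}/\mathbb{H})$ realizing $\mathrm{Ksdim}_1(A)$ locally, uses faithful flatness of $A\to B=\mathcal{O}(\mathbb{V})$ (\cite[Theorem 14.1]{maszub3}) to transfer odd regularity from $B_{\mathfrak{Q}}\simeq\mathcal{O}(\mathsf{U})_{\mathfrak{q}}\otimes\Lambda(\mathsf{V}^*)$ down to $A_{\mathfrak{P}}$ via \cite[Proposition 3.6.1(iii)]{schmitt}, and then computes the defect $\dim_{\Bbbk}(B_{\mathfrak{Q}}/\mathfrak{P}B_{\mathfrak{Q}})$ by \cite[Proposition 3.6.3(ii)]{schmitt}, identifying the fiber with a coset of $\mathbb{H}$, whose stalks have odd Krull dimension $\dim(\mathrm{Lie}(H)_1)$. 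Your closing fallback sentence gestures at exactly this argument, but gesturing is all it does: the flatness-plus-fiber computation is the actual content of the theorem, and your proposal leaves it unexecuted.
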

\begin{proof}
Proposition \ref{graded quotient} and \cite[Proposition 5.23]{milne} imply
\[
\begin{split}
\mathrm{sdim}_0(G/H)&=\dim((G/H)_{ev})=\dim(G_{ev}/H_{ev})\\
& =\dim(G_{ev})-\dim(H_{ev})=\mathrm{sdim}_0(G)-\mathrm{sdim}_0(H).  
\end{split}
\] 	
If $H$ is normal, then by \cite[Theorem 12.11]{maszub3} the statement of our proposition is obvious. Using the same reduction as in Proposition \ref{graded quotient}, based on arguments from \cite[Lemma 14.2]{maszub3}, one can assume that $\mathbb{G}$ and $\mathbb{H}$ satisfy the conditions (a) and (b) from Proposition \ref{graded quotient}.	It remains to show that for
any affine super-subscheme $\mathbb{V}$ constructed therein, we have $\mathrm{sdim}_1(\mathbb{V}/\mathbb{H})=\dim(\mathrm{Lie}(G)_1) -\dim(\mathrm{Lie}(H)_1)$. Let $B$ and $A$ denote
$\mathcal{O}(\mathbb{V})$ and $\mathcal{O}(\mathbb{V}/\mathbb{H})$ respectively.

Without loss of a generality one can assume that $\Bbbk$ is algebraically closed. In fact, for any field extension $\Bbbk\subseteq\Bbbk'$ the naive quotient $A\mapsto \mathbb{G}(A)/\mathbb{H}(A)$, $A\in\mathsf{SAlg}_{\Bbbk'}$, is apparently dense in $(\mathbb{G}/\mathbb{H})_{\Bbbk'}$ with respect the Grothendieck topology of fppf coverings, that is  $\mathbb{G}_{\Bbbk'}/\mathbb{H}_{\Bbbk'}\simeq (\mathbb{G}/\mathbb{H})_{\Bbbk'}$. Remark \ref{base extension} implies the required.

By \cite[Theorem 14.1]{maszub3} the quotient morphism $\mathbb{V}\to \mathbb{V}/\mathbb{H}$ is faithfully flat. Since $\mathbb{V}$ and
$\mathbb{V}/\mathbb{H}$ are affine, it is equivalent to the condition that $A\to B$ is faithfully flat. In particular, for any prime superideal $\mathfrak{P}$ of $A$ there is a prime superideal
$\mathfrak{Q}$ of $B$ such that $\mathfrak{Q}\cap A=\mathfrak{P}$. Moreover, the induced
local superalgebra morphism $A_{\mathfrak{P}}\to B_{\mathfrak{Q}}$ is flat (cf. \cite[Corollary 3.2]{maszub3}).

We choose $\mathfrak{P}$ so that its even component $\mathfrak{P}_0=\mathfrak{p}$ is the largest member of a longest prime chain of
$A_0$ (in particular, $\mathfrak{p}$ is a maximal ideal), and some longest system of odd parameter of $A$ subordinates the smallest prime ideal of this chain. Thus $\mathrm{Ksdim}_1(A)=
\mathrm{Ksdim}_1(A_{\mathfrak{P}})$. The superalgebra $B_{\mathfrak{Q}}\simeq\mathcal{O}(\mathsf{U})_{\mathfrak{q}}\otimes\Lambda(\mathsf{V}^*)$ is obviously oddly regular, thus by \cite[Proposition 3.6.1(iii)]{schmitt} the superalgebra $A_{\mathfrak{P}}$ is oddly regular as well. Next, by \cite[Proposition 3.6.3(ii)]{schmitt} we have
\[
\dim_{\Bbbk}(\Phi_{B_{\mathfrak{Q}}})=\dim_{\Bbbk}(\Phi_{A_{\mathfrak{P}}})+\dim_{\Bbbk}(B_{\mathfrak{Q}}/\mathfrak{P}B_{\mathfrak{Q}}), 
\]
or
\[
\mathrm{Ksdim}_1(\mathbb{V})=\mathrm{Ksdim}_1(\mathbb{V}/\mathbb{H}) +\dim_{\Bbbk}(B_{\mathfrak{Q}}/\mathfrak{P}B_{\mathfrak{Q}}). 
\]
Recall that the superideals $\mathfrak{Q}$ and $\mathfrak{P}$ can be interpreted as the \emph{(closed) points} $x\in\mathbb{V}(\Bbbk)$ and $y\in(\mathbb{V}/\mathbb{H})(\Bbbk)$.
As it has been observed in \cite[Theorem 7.5]{zubkol}, the superalgebra $B_{\mathfrak{Q}}/\mathfrak{P}B_{\mathfrak{Q}}$ is isomorphic to the stalk of the \emph{fiber} $\mathbb{V}_y\simeq \mathbb{V}\times_{\mathbb{V}/\mathbb{H}} \mathrm{SSp}(\Bbbk)$ at the point $x$, where, by virtue of Yoneda lemma, the morphism $\mathrm{SSp}(\Bbbk)\to \mathbb{V}/\mathbb{H}$ is uniquely defined by the point $y$. It is easy to see that $\mathbb{V}_y$ is naturally isomorphic to the left coset  $x\mathbb{H}$, whence to $\mathbb{H}$. Since any stalk of $\mathbb{H}$ is oddly regular with odd Krull dimension $\dim(\mathrm{Lie}(H)_1)$, our theorem follows.
\end{proof}

\section{Orbits}

Let $\mathbb{G}$ be an algebraic group superscheme that acts on a superscheme $\mathbb{X}$ of finite type (on the left). Take $x\in\mathbb{X}(\Bbbk)$ and consider the orbit $\mathbb{G}x\simeq\mathbb{G}/\mathbb{G}_x$. Below we prove that $\mathbb{G}x$ is a \emph{locally closed} super-subscheme of $\mathbb{X}$, provided $\Bbbk$ is algebraically closed.

A superscheme morphism $f:  X\to Y$ is called  a \emph{monomorphism}, provided ${\bf f}(A):  \mathbb{X}(A)\to \mathbb{Y}(A)$ is injective for any superalgebra $A$. For example, the superscheme morphism ${\bar a}_x:  G/G_x\to X$, induced by the orbit morphism $a_x$, is a monomorphism.
Also note that \cite[Lemma 5.5]{maszub2} implies  that $f^e$ is injective, whenever $f$ is a monomorphism.
\begin{lm}\label{G-subfunctor}
The orbit $\mathbb{G}x$ is a $\mathbb{G}$-stable (or, $\mathbb{G}$-saturated) subfunctor of $\mathbb{X}$.	
\end{lm}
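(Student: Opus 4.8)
The plan is to verify $\mathbb{G}$-stability functorially: I must show that for every $A\in\mathsf{SAlg}_{\Bbbk}$, every $h\in\mathbb{G}(A)$ and every $z\in(\mathbb{G}x)(A)$, the point $hz$ again lies in $(\mathbb{G}x)(A)$; this is exactly the assertion that the left action restricts to a morphism $\mathbb{G}\times\mathbb{G}x\to\mathbb{G}x$. The only delicate point is the one already emphasized before the lemma, namely that $\mathbb{G}(A)x$ is in general a \emph{proper} subset of $(\mathbb{G}x)(A)$, so one cannot simply write $z=gx$ with $g\in\mathbb{G}(A)$ and translate by $h$. Instead I will use that $\mathbb{G}x$ is the image of the embedding $\mathbb{G}/\mathbb{G}_x\to\mathbb{X}$, that is, a \emph{sub-faisceau} of the faisceau $\mathbb{X}$, together with the density of the naive quotient $(\mathbb{G}/\mathbb{G}_x)_{(n)}$ inside $\mathbb{G}/\mathbb{G}_x$.

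First I would record the fppf-local description of the orbit coming from these two facts: a point $z\in\mathbb{X}(A)$ lies in $(\mathbb{G}x)(A)$ if and only if there is an fppf covering $\iota_A\colon A\to A'$ such that $\mathbb{X}(\iota_A)(z)=g'x$ for some $g'\in\mathbb{G}(A')$. The ``only if'' direction is precisely the density of the naive quotient combined with the factorization of ${\bf a}_x$ through $\mathbb{G}/\mathbb{G}_x\to\mathbb{X}$; the ``if'' direction is the saturated-subsheaf property of the sub-faisceau $\mathbb{G}x$ in the sheaf $\mathbb{X}$ (the local witness over $A'$ descends, since $\mathbb{G}x$ is a sheaf and $\mathbb{X}$ is separated).

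Now, given such a $z$ and an element $h\in\mathbb{G}(A)$, I would fix a covering $\iota_A\colon A\to A'$ as above and set $h'=\mathbb{G}(\iota_A)(h)\in\mathbb{G}(A')$. Because the left action is a natural transformation, restriction along $\iota_A$ commutes with it, whence
\[
\mathbb{X}(\iota_A)(hz)=h'\cdot\mathbb{X}(\iota_A)(z)=h'(g'x)=(h'g')x,
\]
which manifestly lies in $\mathbb{G}(A')x$. Applying the fppf-local criterion once more, now to the point $hz$ over the \emph{same} covering $A\to A'$, I conclude $hz\in(\mathbb{G}x)(A)$. As $A$, $h$ and $z$ were arbitrary, $\mathbb{G}x$ is $\mathbb{G}$-stable.

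The main obstacle is thus the gap between $\mathbb{G}(A)x$ and $(\mathbb{G}x)(A)$: the statement would be immediate for the naive quotient, and the whole content is that the fppf-local witness $g'$ for $z$ can be translated by $h'$ into a witness $h'g'$ for $hz$ over one and the same covering, after which the sub-faisceau property closes the argument. I only need to take care to apply the functoriality of the action over $A'$ rather than over $A$, and to observe that no refinement of the covering is needed, since left translation by $h'$ does not disturb it.
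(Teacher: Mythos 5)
Your proof is correct and is essentially the same as the paper's: both rest on the fppf-local characterization that $z\in(\mathbb{G}x)(A)$ exactly when some fppf covering $A\to A'$ carries $z$ into $\mathbb{G}(A')x$, and then use naturality of the action to turn the witness $g'$ for $z$ into the witness $h'g'$ for $hz$ over the very same covering. The paper's proof is just a two-sentence compression of yours; your extra care in justifying both directions of the characterization (density of the naive quotient for ``only if,'' the sub-faisceau property for ``if'') makes explicit what the paper leaves implicit.
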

\begin{proof}
An element $y\in \mathbb{X}(A)$ belongs to $(\mathbb{G}x)(A)$ if and only if there is a fppf covering $A'$ of $A$ such that $y'=\mathbb{X}(\iota_A)(y)$ belongs to $\mathbb{G}(A')x$. Thus for any $g\in\mathbb{G}(A)$ we have 	$\mathbb{X}(\iota_A)(gy)=g' y'\in\mathbb{G}(A')x$, hence $gy\in (\mathbb{G}x)(A)$.
\end{proof}
\begin{lm}\label{a partial case of the next theorem}
If $\mathrm{SSpec}(B) \to \mathrm{SSpec}(A)$ is a monomorphism of superschemes of finite type, induced by a superalgebra homomorphism $\phi:  A\to B$, then $B_1=B_0\phi(A_1)$.
\end{lm}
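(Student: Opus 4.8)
The plan is to detect the odd part of $B$ through the odd tangent spaces at closed points, and to read off the generation statement from the injectivity that the monomorphism hypothesis forces on those tangent spaces. Since $B_1=B_0\phi(A_1)$ is equivalent to the vanishing of the finitely generated $B_0$-module $M:=B_1/B_0\phi(A_1)$, and both the hypothesis and the conclusion survive the faithfully flat base change $\Bbbk\to\overline{\Bbbk}$ (a monomorphism of functors of points stays a monomorphism after extending scalars, and $M\otimes_\Bbbk\overline{\Bbbk}=0$ forces $M=0$), I may assume $\Bbbk$ algebraically closed. Then, because $B_0$ is a finite type, hence Jacobson, $\Bbbk$-algebra, it suffices by Nakayama to prove $M\otimes_{B_0}\kappa(y)=0$ for every closed point $y$ of $\mathrm{SSpec}(B)$.

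The key device is the superalgebra of odd dual numbers $\Bbbk[\varepsilon]$ with $\varepsilon$ odd (so $\varepsilon^2=0$ automatically). First I would check that, for a fixed $\Bbbk$-point $y$ with evaluation $\mathrm{ev}_y\colon B\to\Bbbk$ and maximal ideal $\mathfrak{m}_y=\ker(\mathrm{ev}_y|_{B_0})$, the superalgebra homomorphisms $\psi\colon B\to\Bbbk[\varepsilon]$ lying over $y$ (reducing to $\mathrm{ev}_y$ modulo $\varepsilon$) are exactly the maps $\psi(b_0+b_1)=\mathrm{ev}_y(b_0)+\varepsilon\,v(b_1)$ with $v\colon B_1\to\Bbbk$ a $B_0$-linear map through $\mathrm{ev}_y$. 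A short multiplicativity check, using that $\mathrm{ev}_y$ is a homomorphism killing odd elements (whence $\mathrm{ev}_y(b_1b_1')=0$) and that $\varepsilon^2=0$, shows these are precisely parametrized by the dual space $(B_1/\mathfrak{m}_y B_1)^*$; note the purely even residue field $\Bbbk$ absorbs no even tangent direction. The same description holds for $A$ over the image point $x$, where $\mathrm{ev}_x=\mathrm{ev}_y\circ\phi$.

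Now I would exploit the monomorphism. The map $\mathrm{SSp}(B)(\Bbbk[\varepsilon])\to\mathrm{SSp}(A)(\Bbbk[\varepsilon])$ is injective, and on the fibres over $y$ and $x$ it becomes the linear map $(B_1/\mathfrak{m}_y B_1)^*\to(A_1/\mathfrak{m}_x A_1)^*$, $v\mapsto v\circ\phi$, that is, the transpose of the map $\overline{\phi}\colon A_1/\mathfrak{m}_x A_1\to B_1/\mathfrak{m}_y B_1$ induced by $\phi$. Injectivity of this transpose between the finite-dimensional spaces guaranteed by the finite type hypothesis is equivalent to surjectivity of $\overline{\phi}$. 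Hence $B_1$ is contained in the $\Bbbk$-span of $\phi(A_1)$ together with $\mathfrak{m}_y B_1$, so $B_1\subseteq B_0\phi(A_1)+\mathfrak{m}_y B_1$, which says exactly that $M=\mathfrak{m}_y M$, i.e. $M\otimes_{B_0}\kappa(y)=0$. Running this over all closed points $y$ gives $M=0$, as required.

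I expect the main obstacle to be conceptual rather than computational: recognizing that probing the functor of points with the odd dual numbers isolates precisely the odd cotangent datum $B_1/\mathfrak{m}_y B_1$, and that the resulting injectivity of odd tangent maps must be \emph{dualized} into a surjectivity of cotangent maps before Nakayama applies. The verification that $\psi$ is multiplicative, the finite-dimensionality coming from finite type, and the descent to algebraically closed $\Bbbk$ are all routine.
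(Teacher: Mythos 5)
Your proof is correct and follows essentially the same route as the paper's: the paper also probes the monomorphism at each maximal ideal $\mathfrak{m}$ of $B_0$ with a purely odd one-dimensional extension $R=L\oplus V/W'$ of the residue field (i.e., odd dual numbers over $L$), deduces that $\phi(A_1)$ generates the odd part modulo $\mathfrak{m}$, and concludes by Nakayama plus the local--global principle. The only differences are cosmetic: you first reduce to $\Bbbk=\overline{\Bbbk}$ and phrase the key step as injectivity of the transpose map on odd (co)tangent spaces, whereas the paper works over arbitrary residue fields $L$ and instead exhibits $|L|\geq 2$ distinct superalgebra maps into $R$ that agree on the intermediate subalgebra $C=B_0\oplus B_0\phi(A_1)$.
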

\begin{proof}
Let $C$ denote $B_0\oplus B_0\phi(A_1)$. The monomorphism $\mathrm{SSpec}(\phi)$ is a composition of $\mathrm{SSpec}(B)\to \mathrm{SSpec}(C)$ and
$\mathrm{SSpec}(C)\to\mathrm{SSpec}(A)$. Thus the first morphism is a monomorphism as well.

Let $\mathfrak{m}$ be a maximal ideal in $B_0$. Then $B/B\mathfrak{m}= L\oplus V$, where $L=B_0/\mathfrak{m}$ is a field extension of $\Bbbk$ and $V=B_1/B_1\mathfrak{m}$ is a finite-dimensional purely odd $L$-superspace such that $V^2=0$. If $W=(C_1 + B_1\mathfrak{m})/B_1\mathfrak{m}$ is a proper subspace of $V$, then choose a subspace $W'$ of $V$ of codimension $1$, such that $W\subseteq W'$. Observe that any (super)subspace of $V$ is a superideal of $B/B\mathfrak{m}$. Thus $R=L\oplus V/W'$ is a superalgebra and any linear map of $L$-superspaces $B/B\mathfrak{m}\to R$ is a superalgebra morphism.

Let $v$ be a basis vector of a complement of $W'$. Then
there are at least $|L|\geq 2$ different superalgebra morphisms from $B/B\mathfrak{m}$ to a superalgebra $R$, which are the same being restricted on the super-subalgebra $(C+B\mathfrak{m})/B\mathfrak{m}$. They are $\phi_a(W')=0, \phi_a(v)=av+W', a\in L$.
Furthermore, it also follows that the map $SSp \ B(R)\to SSp \ C(R)$ is not injective. This contradiction infers that $B_1=C_1+B_1\mathfrak{m}$, and hence, by Nakayama's lemma, $(B_1)_{\mathfrak{m}}=(C_1)_{\mathfrak{m}}$ for any maximal ideal $\mathfrak{m}$. Theorem 1 from \cite[Chapter II, \S 3]{bur},  implies $C_1=B_1$.
\end{proof}
\begin{pr}\label{local immersion}
Let $f:  X\to Y$ be a monomorphism of superschemes of finite type. Then there is an open dense subset $V\subseteq Y^e$ such that $U=(f^e)^{-1}(V)\neq\emptyset$ and $f|_U:  U\to V$ is an immersion.
\end{pr}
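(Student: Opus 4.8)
The plan is to split the statement into two essentially independent tasks — controlling the underlying map $f^e$ together with the even part of the structure sheaf, and controlling the odd part — and then to fuse the two using Lemma \ref{a partial case of the next theorem}. Throughout I would use the immersion criterion recalled in Section 3: $f|_U$ is an immersion as soon as $f^e|_U$ is a homeomorphism of $U^e$ onto a locally closed subset and every stalk map $\mathcal{O}_{Y,f^e(x)}\to\mathcal{O}_{X,x}$ is surjective.

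Since $f$ is a monomorphism, $f^e$ is injective, and because $\mathbb{X}_{ev}(A)\simeq\mathbb{X}(A_0)$ the induced morphism $f_{res}\colon X_{res}\to Y_{res}$ is again a monomorphism, now of schemes of finite type. First I would invoke the classical fact that a monomorphism of finite type between Noetherian schemes is generically an immersion: such an $f_{res}$ is unramified (its diagonal is an isomorphism), by generic flatness it becomes flat over a suitable dense open $V\subseteq Y^e$, and a flat unramified monomorphism is étale, hence an open immersion. Choosing $V$ to meet $f^e(X^e)$ (for instance around the image of the generic point of a component of $X$) guarantees $U=(f^e)^{-1}(V)\neq\emptyset$; over $U$ the map $f^e$ is a homeomorphism onto a locally closed subset of $Y^e$, and the stalk maps $(\mathcal{O}_Y)_0/(\mathcal{O}_Y)_1^2\to(\mathcal{O}_X)_0/(\mathcal{O}_X)_1^2$ are surjective. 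By the criterion, it now remains to prove, for each $x\in U^e$, that the local map $\phi\colon A:=\mathcal{O}_{Y,f^e(x)}\to B:=\mathcal{O}_{X,x}$ is surjective. Covering $U$ by affine opens mapping into affine opens of $V$, each restriction $\mathrm{SSpec}(B')\to\mathrm{SSpec}(A')$ is again a monomorphism of finite type, so Lemma \ref{a partial case of the next theorem} gives $B'_1=B'_0\phi(A'_1)$; localizing yields $B_1=B_0\phi(A_1)$. The even step above reads $B_0=\phi(A_0)+B_1^2$, and squaring the odd identity gives $B_1^2=B_0\phi(A_1^2)$.

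To finish I would run a nilpotent Nakayama argument. Put $\mathfrak{a}=A_1^2\subseteq A_0$; since $A_1$ is a finitely generated $A_0$-module and odd elements square to zero, $\mathfrak{a}$ is nilpotent. Regard $M=B_0/\phi(A_0)$ as an $A_0$-module via $\phi$. From $B_0=\phi(A_0)+B_1^2=\phi(A_0)+\phi(\mathfrak{a})B_0$ we get $M=\overline{\phi(\mathfrak{a})B_0}=\mathfrak{a}M$, whence $M=\mathfrak{a}^NM=0$ and $B_0=\phi(A_0)$. Then $B_1=B_0\phi(A_1)=\phi(A_0)\phi(A_1)=\phi(A_1)$, so $\phi$ is surjective. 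As this holds at every $x\in U^e$, the sheaf morphism is surjective over $U$ and $f|_U\colon U\to V$ is an immersion.

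The main obstacle is the even/topological step: honestly establishing that the scheme monomorphism $f_{res}$ is generically an open immersion, i.e. the passage through unramifiedness and generic flatness together with the bookkeeping that keeps $V$ dense in $Y^e$ while ensuring $U\neq\emptyset$ across the possibly several irreducible components of $Y$ (and handling the nonreducedness of $Y_{res}$, e.g. by first restricting to a dense open where generic flatness applies). By contrast, the genuinely super-theoretic part — upgrading surjectivity modulo odd squares to honest surjectivity — is short once Lemma \ref{a partial case of the next theorem} is available, precisely because $A_1^2$ is nilpotent.
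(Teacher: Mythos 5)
Your overall architecture is the same as the paper's — handle the even/topological part via the underlying scheme monomorphism $f_{res}$, then promote surjectivity of stalk maps from $\overline{R}\to\overline{S}$ to $R\to S$ using Lemma \ref{a partial case of the next theorem} and nilpotence of $R_1^2$ — and your odd-part step is correct (the paper does an induction along the filtration by powers of $R_1^2$; your Nakayama argument with the nilpotent ideal $A_1^2$ is an equivalent, slightly cleaner packaging). The genuine gap is precisely in the step you flag as ``the main obstacle'': your proposed proof that $f_{res}$ is generically an immersion, via unramifiedness, generic flatness, and ``flat unramified monomorphism $=$ \'etale $=$ open immersion''. This route cannot be made to work, for two independent reasons. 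First, ``open immersion'' is the wrong target: the proposition demands $U=(f^e)^{-1}(V)\neq\emptyset$, and if the image of $f$ is nowhere dense — e.g. $f:\mathrm{Spec}(\Bbbk)\to\mathrm{Spec}(\Bbbk[x])$, a closed point, which is a perfectly good finite-type monomorphism — then $f_{res}$ is flat at \emph{no} point of its image, so no dense $V$ meeting the image can make $f|_U$ flat, let alone an open immersion; the generic conclusion must be allowed to be a \emph{closed} immersion into $V$. Second, generic flatness requires a reduced base, whereas $Y_{res}$ (locally $\mathrm{Spec}(A_0/A_1^2)$) is typically nonreduced: for the closed immersion $\mathrm{SSpec}(\Bbbk[x])\to\mathrm{SSpec}(\Bbbk[x,\varepsilon]/(\varepsilon^2))$ with $\varepsilon$ even, the morphism $f_{res}$ is flat over no nonempty open of $Y_{res}$. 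Your suggested repair — ``first restricting to a dense open where generic flatness applies'' — is vacuous, since nilpotents do not disappear on dense opens and no such open exists. In both examples the conclusion of the proposition holds trivially ($f$ is already an immersion), but your mechanism for proving it breaks: flatness-based arguments structurally cannot produce the non-flat immersions that actually occur.

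The paper avoids this entirely by \emph{citing} the classical theorem on monomorphisms of schemes of finite type, \cite[I, \S 3, Corollary 4.7]{dg}, which directly supplies the dense open $V$ with $U\neq\emptyset$ and surjectivity of the stalk maps of $f_{res}$; the rest of its proof (reduction to the affine case, Lemma \ref{a partial case of the next theorem}, nilpotence of $R_1^2$) matches your plan. If you insist on a self-contained proof of the scheme-level fact, the correct route is not flatness but an argument of the same flavor as your odd-part step: factor through the scheme-theoretic image, note that a monomorphism has fibers $\emptyset$ or $\mathrm{Spec}\,\kappa(y)$, use that the local ring at a generic point is Artinian so that Nakayama with \emph{nilpotent} maximal ideal gives surjectivity of the stalk map there, and then spread out and conclude by Noetherian induction.
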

\begin{proof}
By \cite[Lemma 1.4 and Lemma 1.5]{maszub3} one needs to consider the case $X=\mathrm{SSpec}(B)$ and $Y=\mathrm{SSpec}(A)$ only. Then $f=\mathrm{SSpec}(\phi)$, where $\phi:  A\to B$ is a superalgebra morphism.
	
Since the induced morphism $f_{res}:  X_{res}=\mathrm{Spe}c(\overline{B})\to Y_{res}=\mathrm{Spec}(\overline{A})$ is also a monomorphism (in the category of schemes), \cite[I, \S 3, Corollary 4.7]{dg} implies that there is an open dense subset $V\subseteq Y^e=Y_{res}^e$ such that
$U=(f^e)^{-1}(V)\neq\emptyset$ and for any $x\in U$ the induced algebra morphism
\[ \mathcal{O}_{Y, y}/\mathcal{J}_y \to \mathcal{O}_{X, x}/\mathcal{J}_x,\]
where $\mathcal{J}_y=\mathcal{O}_{Y, y}(\mathcal{O}_{Y, y})_1$ and $\mathcal{J}_x=\mathcal{O}_{X, x}(\mathcal{O}_{X, x})_1$, $y=f^e(x)$, is surjective. Let $R, S$ and $\psi$ denote $\mathcal{O}_{Y, y}, \mathcal{O}_{X, x}$ and the induced local morphism $\mathcal{O}_{Y, y}\to \mathcal{O}_{X, x}$ respectively. By Lemma \ref{a partial case of the next theorem} there is $S_1=S_0 \psi(R_1)$. As it has been observed, the algebra morphism
\[\overline{R}\simeq R_0/R_1^2\to \overline{S}=S_0/S_1^2=S_0/S_0 \psi(R_1)^2\]
is surjective. The induction on $n$ infers that all space morphisms
\[R_1^{2n}/R_1^{2n+2}\to S_1^{2n}/S_1^{2n+2}=S_0\psi(R_1)^{2n}/S_0\psi(R_1)^{2n+2}\]
are also surjective for any $n\geq 1$. Since the $R_0$-ideal $R_1^2$ is nilpotent, $\psi|_{R_0}:  R_0\to S_0$ is surjective, hence $\psi$ is. Thus our proposition follows.
\end{proof}
The following lemma is a folklore.
\begin{lm}\label{underlying topological space}(see \cite[A.12]{milne})
Let $X$ be a superscheme of finite type, then $\mathbb{X}(\overline{\Bbbk})$ is dense in $X^e$. In particular, if $f:  X\to Y$ is a morphism of superschemes, then $f^e$ is surjective if and only if ${\bf f}(\overline{\Bbbk}):  \mathbb{X}(\overline{\Bbbk})\to \mathbb{Y}(\overline{\Bbbk})$ is.
\end{lm}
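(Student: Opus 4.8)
The statement to prove is Lemma \ref{underlying topological space}: for a superscheme $X$ of finite type, $\mathbb{X}(\overline{\Bbbk})$ is dense in $X^e$, and for a morphism $f\colon X\to Y$, the map $f^e$ is surjective iff ${\bf f}(\overline{\Bbbk})$ is.

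The plan is to reduce everything to the underlying reduced scheme and invoke the classical statement (cited as \cite[A.12]{milne}). First I would observe that the topological space $X^e$ coincides with the underlying space of the ordinary scheme $X_{res}=(X^e,(\mathcal{O}_X/\mathcal{I}_X)_0)$, since $X_{res}$ is obtained from $X$ purely by quotienting the structure sheaf and has the same underlying topological space by construction (this is exactly how $X_{ev}$ and $X_{res}$ were defined in the Superschemes section). Moreover $X$ being of finite type over $\Bbbk$ forces $X_{res}$ to be a scheme of finite type over $\Bbbk$. Next I would identify the $\overline{\Bbbk}$-points: a morphism $\mathrm{SSpec}(\overline{\Bbbk})\to X$ is the same as a morphism $\mathrm{Spec}(\overline{\Bbbk})\to X_{res}$, because $\overline{\Bbbk}$ is purely even and any superalgebra map into it must kill all odd elements, hence factors through the quotient defining $X_{res}$. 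Thus $\mathbb{X}(\overline{\Bbbk})\simeq X_{res}(\overline{\Bbbk})$ as subsets of the common topological space $X^e$.

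Having made this identification, the density of $\mathbb{X}(\overline{\Bbbk})$ in $X^e$ is precisely the density of the $\overline{\Bbbk}$-points (the closed points, since $\overline{\Bbbk}$ is algebraically closed) in a scheme of finite type over $\Bbbk$, which is the classical Jacobson/Nullstellensatz-type statement from \cite[A.12]{milne}. For the second assertion about a morphism $f\colon X\to Y$, I would pass to $f_{res}\colon X_{res}\to Y_{res}$, which has the same underlying continuous map $f^e$ on topological spaces. Then I would argue both directions: if ${\bf f}(\overline{\Bbbk})$ is surjective on $\overline{\Bbbk}$-points, then $f^e$ has image containing the dense set $Y_{res}(\overline{\Bbbk})=\mathbb{Y}(\overline{\Bbbk})$; and since the image of a morphism of finite-type schemes is constructible (Chevalley), a constructible set containing a dense set of closed points must be all of $Y^e$, giving surjectivity of $f^e$. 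Conversely, if $f^e$ is surjective, then every closed point of $Y_{res}$ is hit, and since closed points pull back to closed points under a finite-type morphism (again because residue field extensions of $\Bbbk$-points of finite type are algebraic, so $\overline{\Bbbk}$-points map to $\overline{\Bbbk}$-points), one recovers surjectivity of ${\bf f}(\overline{\Bbbk})$.

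I expect the main subtlety to lie not in the density statement itself (which is standard once the reduction is made) but in cleanly justifying that $\overline{\Bbbk}$-points of $X$ correspond exactly to $\overline{\Bbbk}$-points of $X_{res}$ and that the image of $f^e$ is constructible. The constructibility is what makes "dense image of closed points implies surjective" work, and this relies essentially on $X$ and $Y$ being of finite type. Since the lemma is labeled folklore and cited directly to \cite[A.12]{milne}, I would keep the superscheme-theoretic preamble brief and lean on the classical scheme-theoretic result for the substance, merely checking that the passage $X\mapsto X_{res}$ preserves the relevant data (underlying space, $\overline{\Bbbk}$-points, and the finite-type hypothesis).
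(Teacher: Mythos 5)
Your reduction of the density statement is correct and is essentially the paper's own argument in functorial clothing: the paper works directly in an affine chart $\mathrm{SSpec}(A)$, notes that its underlying space is $\mathrm{Spec}(A_0)$, and applies the Nullstellensatz to the finitely generated algebra $A_0$ to produce a maximal superideal (whose residue field is algebraic over $\Bbbk$) inside any nonempty open set. This is the same observation you make, namely that $X^e=(X_{res})^e$, that any superalgebra map into the purely even field $\overline{\Bbbk}$ kills odd elements (so $\mathbb{X}(\overline{\Bbbk})\simeq X_{res}(\overline{\Bbbk})$ as sets of points of $X^e$), and that the classical finite-type density result then applies to $X_{res}$. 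Either way the super-structure is topologically invisible, and this part of your proposal is sound; citing the classical result after the reduction is, if anything, cleaner than the paper's direct verification.

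The problems are in the second statement, where both of your justifications, as written, do not hold up (the paper, for its part, simply declares this part ``obvious''). First, the principle you invoke --- ``a constructible set containing a dense set of closed points must be all of $Y^e$'' --- is false: in $\mathbb{A}^2$ the set $D(x)\cup\{(0,0)\}$ is constructible and its closed points are dense, yet it omits every point of the line $x=0$ other than the origin. What saves your argument is that the image of $f^e$ contains not merely a dense set of closed points but \emph{all} of them, since $\mathbb{Y}(\overline{\Bbbk})$ is exactly the set of closed points of $Y^e$ (as $Y_{res}$ is of finite type over $\Bbbk$); then the complement of the image is a constructible set containing no closed point, hence empty, because finite-type schemes over a field are Jacobson. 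Second, in the converse direction the fact you cite --- residue fields only shrink along $f$, so $\overline{\Bbbk}$-points map to $\overline{\Bbbk}$-points --- only shows that ${\bf f}(\overline{\Bbbk})$ is well defined; it says nothing about producing a preimage, which is what ``closed points pull back to closed points'' would require. The step you actually need is: given a closed point $y\in Y^e$, surjectivity of $f^e$ makes $(f^e)^{-1}(y)$ a nonempty closed subset of the finite-type scheme $X_{res}$ (equivalently, the scheme-theoretic fiber is a nonempty finite-type scheme over $\kappa(y)$), and such a set contains a closed point of $X$, again by the Jacobson property; its residue field is finite over $\kappa(y)$, hence algebraic over $\Bbbk$, giving a point of $\mathbb{X}(\overline{\Bbbk})$ over $y$. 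With these two repairs --- both resting on the Jacobson property plus Chevalley, i.e., exactly the finite-type hypotheses you yourself flagged as essential --- your argument is complete.
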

\begin{proof}
For any field extension $\Bbbk\subseteq L$ the set $\mathbb{X}(L)$ consists of all points $x\in X^e$ such that $\kappa(x)$ is a subfield of $L$. Without loss of a generality, one can assume that $X\simeq\mathrm{SSp}(A)$ and $x$ corresponds to a prime superideal $\mathfrak{P}$ of
$A$. Any open neighborhood $U$ of $x$ has a form $\{\mathfrak{Q}\mid \mathfrak{Q}\not\supseteq I  \}$ for a certain superideal $I$ of $A$. Since $A_0$ is a finitely generated $\Bbbk$-algebra and the ideal $I_0$ is not nilpotent, Hilbert Nullestellensatz implies that there is a maximal superideal $\mathfrak{M}$ such that $\mathfrak{M}\not\supseteq I$. If $y$ is the corresponding point in $X^e$, then $\kappa(y)$ is an algebraic extension of $\Bbbk$, whence $y\in\mathbb{X}(\overline{\Bbbk})\cap U$. The second statement is now obvious.
\end{proof}
\begin{tr}\label{orbit map is an immersion}
Let $\mathbb{X}$ be a superscheme of finite type and let an algebraic group superscheme $\mathbb{G}$ act on $\mathbb{X}$ on the left. If $\Bbbk$ is algebraically closed, then the orbit $\mathbb{G}x$ is a locally closed super-subscheme of $\mathbb{X}$ for any $x\in\mathbb{X}(\Bbbk)$. Moreover, if $\mathbb{G}$ is a smooth algebraic group superscheme (i.e. $G_{res}$ is a smooth scheme), then there are always closed orbits.	
\end{tr}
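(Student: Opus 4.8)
The plan is to factor the orbit morphism through $\overline{a}_x\colon G/G_x\to X$, which is already observed to be a monomorphism of superschemes of finite type, and then to upgrade this monomorphism to an immersion by combining the generic immersion statement with a homogeneity argument. First I would apply Proposition~\ref{local immersion} to $\overline{a}_x$: it yields a nonempty open $U\subseteq (G/G_x)^e$ on which $\overline{a}_x$ restricts to an immersion onto a locally closed super-subscheme of $X$. The remaining task is to propagate this local immersion over the whole orbit, and here I would exploit that $\mathbb{G}$ acts on $\mathbb{G}/\mathbb{G}_x$ by left translation and that $\overline{a}_x$ is equivariant: for every $g\in\mathbb{G}(\Bbbk)$ one has $\overline{a}_x\circ l_g=\rho_g\circ\overline{a}_x$, where $l_g$ is the left shift on $G/G_x$ and $\rho_g$ is the action of $g$ on $X$, both being \emph{superscheme automorphisms} since $g$ is a $\Bbbk$-point.

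Next I would globalize. Because $\Bbbk$ is algebraically closed, every faithfully flat finitely presented cover of $\Bbbk$ admits a $\Bbbk$-point, so fppf-sheafification does not change $\Bbbk$-points; hence $(\mathbb{G}/\mathbb{G}_x)(\Bbbk)=\mathbb{G}(\Bbbk)/\mathbb{G}_x(\Bbbk)$, and $\mathbb{G}(\Bbbk)$ acts transitively on it. Picking a $\Bbbk$-point inside $U$ and translating, the opens $l_g(U)$, $g\in\mathbb{G}(\Bbbk)$, contain every $\Bbbk$-point of $(G/G_x)^e$. By Lemma~\ref{underlying topological space} these points are dense, and since $(G/G_x)^e$ is Jacobson (finite type over $\Bbbk$), an open set meeting all closed points is everything; thus $\{l_g(U)\}$ covers $(G/G_x)^e$. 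On each $l_g(U)$ the identity $\overline{a}_x|_{l_g(U)}=\rho_g\circ(\overline{a}_x|_U)\circ l_{g^{-1}}$ exhibits $\overline{a}_x$ as a composite of $\overline{a}_x|_U$ with automorphisms, so it is again an immersion; in particular every stalk map $\mathcal{O}_{X,\overline{a}_x^e(p)}\to\mathcal{O}_{G/G_x,p}$ is surjective. For the topological part I would use that $\overline{a}_x^e$ is injective (a monomorphism is injective on points), that its image $(Gx)^e$ is constructible, and that the automorphisms $\rho_g$ preserve it: a $\mathbb{G}(\Bbbk)$-stable constructible set whose closed points all lie in a dense open of its closure is locally closed. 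Together with the surjectivity of all stalk maps this shows $\overline{a}_x$ is an immersion, i.e. $\mathbb{G}x$ is a locally closed super-subscheme of $\mathbb{X}$.

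For the second assertion I would first identify $(Gx)^e$ with a purely even orbit. By Proposition~\ref{graded quotient} we have $(G/G_x)_{ev}\simeq G_{ev}/(G_x)_{ev}=G_{ev}/(G_{ev})_x$, and since passing to the even part leaves the underlying topological space unchanged this gives $(Gx)^e=(G_{ev}x)^e=(G_{res}x)^e$. As $\mathbb{G}x$ is now known to be locally closed, it is closed in $\mathbb{X}$ if and only if $(Gx)^e$ is closed in $X^e$, hence if and only if $G_{res}x$ is closed in $X_{res}$. When $G_{res}$ is smooth the classical theory of group scheme actions (e.g. \cite[Proposition~7.6]{milne}) supplies a closed $G_{res}$-orbit in $X_{res}$; choosing $x\in\mathbb{X}(\Bbbk)$ in such an orbit, the corresponding $\mathbb{G}x$ is closed.

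The main obstacle I expect is precisely the point stressed in the introduction: because $\mathbb{G}(\Bbbk)=\mathbb{G}_{ev}(\Bbbk)$ only detects the even part, translating by $\Bbbk$-points a priori controls only the even directions, while an immersion also requires surjectivity in the odd directions. The resolution is that the equivariance $\overline{a}_x\circ l_g=\rho_g\circ\overline{a}_x$ transports the \emph{entire} local immersion datum, odd stalk information included, and the density of closed points in the Jacobson space $(G/G_x)^e$ guarantees that even translations already reach every point. This is exactly why the proof can bypass the usual minimal-dimension reasoning, which fails here because a topologically $\mathbb{G}(\Bbbk)$-stable closed subset need not be $\mathbb{G}$-stable as a super-subscheme.
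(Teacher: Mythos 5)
Your outline reproduces the paper's skeleton: apply Proposition~\ref{local immersion}, translate the resulting open set by $\mathbb{G}(\Bbbk)$, use density of closed points to see that the translates cover $G/G_x$, and reduce the closedness question to the classical statement for $G_{res}$ acting on $X_{res}$. Your route to the covering step is a sound (and arguably cleaner) variant: since every fppf covering of an algebraically closed field admits a $\Bbbk$-point, $\mathbb{G}(\Bbbk)$ acts transitively on $(\mathbb{G}/\mathbb{G}_x)(\Bbbk)$, whereas the paper instead pulls the union $\mathbb{W}=\bigcup_g g\mathbb{U}$ back to $\mathbb{G}$ and invokes the fact that open subfunctors are closed in the fppf topology. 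The smooth/closed-orbit part agrees with the paper's.

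There is, however, a genuine gap in your final assembly. You conclude that $\overline{a}_x$ is an immersion from: injectivity on points, local closedness of the image, and surjectivity of all stalk maps $\mathcal{O}_{X,\overline{a}_x^e(p)}\to\mathcal{O}_{G/G_x,p}$. This criterion is false, because an immersion must in addition be a \emph{homeomorphism onto its image}, and being an immersion is \emph{not} local on the source. Concretely, let $f:(\mathbb{A}^1\setminus\{0\})\sqcup \mathrm{SSpec}(\Bbbk)\to\mathbb{A}^1$ be the open inclusion on the first component and the origin on the second: this is a monomorphism (the diagonal of $X\times_Y X$ is an isomorphism), it is injective with closed image, every stalk map is surjective, and it is an immersion on each piece of an open cover of the source --- yet it is not an immersion, since the extra point is open in the source while its image is not open in the image. (The underlying issue: stalkwise surjectivity of $\mathcal{O}_{Y,f^e(x)}\to\mathcal{O}_{X,x}$ is equivalent to surjectivity of $\mathcal{O}_Y\to f_*\mathcal{O}_X$ only \emph{after} one knows $f^e$ is a homeomorphism onto a locally closed subset; this is exactly how the criterion is phrased in Section~3 of the paper.) The missing ingredient is the part of Proposition~\ref{local immersion} that your summary drops: $U$ is not merely some open on which $\overline{a}_x$ restricts to an immersion, it is the \emph{full preimage} $\overline{a}_x^{-1}(V)$ of a dense open $V\subseteq X$, and the immersion is $U\to V$. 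Then $\mathbb{W}=\bigcup_g l_g(\mathbb{U})$ is the full preimage of the open $\mathbb{V}'=\bigcup_g \rho_g(\mathbb{V})$, each restriction $\overline{a}_x^{-1}(\rho_g(V))\to\rho_g(V)$ is an immersion, and since immersions are local on the \emph{target} (this is the paper's citation \cite[Lemma 1.4]{maszub3}), $\overline{a}_x|_{\mathbb{W}}:\mathbb{W}\to\mathbb{V}'$ is an immersion; once $\mathbb{W}=\mathbb{G}/\mathbb{G}_x$, composing with the open embedding $\mathbb{V}'\to\mathbb{X}$ finishes the proof. In particular the sets $\rho_g(V)\cap (Gx)^e$ are then open in $(Gx)^e$, which is precisely the homeomorphism-onto-image datum your version never establishes. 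With this repair your argument coincides with the paper's.
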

\begin{proof}
All we need is to prove that the morphism ${\bar a}_x:  G/G_x\to X$ is an immersion. By Proposition \ref{local immersion} there is a dense open super-subscheme $V\subseteq X$ such that the induced morphism
$U={\bar a}_x^{-1}(V)\to V$ is an immersion.

Let $\mathbb{U}$ and $\mathbb{V}$ denote the corresponding open super-subschemes of $\mathbb{G}/\mathbb{G}_x$ and $\mathbb{X}$, respectively.
Then $\mathbb{W}=\cup_{g\in\mathbb{G}(\Bbbk)} g\mathbb{U}$ is an open super-subscheme of $\mathbb{G}/\mathbb{G}_x$ and by \cite[Lemma 1.4]{maszub3}
the restriction of ${\bar {\bf a}}_x$ on $\mathbb{W}$ is an immersion. It remains to show that $\mathbb{W}=\mathbb{G}/\mathbb{G}_x$.
Let $\mathbb{W}'$ be the full inverse image of $\mathbb{W}$ in $\mathbb{G}$. Since $\mathbb{W}$ is obviously $\mathbb{G}(\Bbbk)$-stable,
we have $\mathbb{W}'(\Bbbk)=\mathbb{G}(\Bbbk)$, and Lemma \ref{underlying topological space} infers $\mathbb{W}'=\mathbb{G}$.
Thus $(\mathbb{G}/\mathbb{G}_x)_{(n)}\subseteq\mathbb{W}$ and since any open subfunctor is closed with respect to the Grothendieck topology of fppf coverings (just superize \cite[I.1.7(6)]{jan}), it follows that $\mathbb{W}=\mathbb{G}/\mathbb{G}_x$.

Since $\mathbb{X}(\Bbbk)=\mathbb{X}_{ev}(\Bbbk)$ and the super-subscheme $\mathbb{X}_{ev}$ is obviously $\mathbb{G}_{ev}$-stable, the restriction of ${\bf a}_x$ on $\mathbb{G}_{ev}$ coincides with the orbit morphism $\mathbb{G}_{ev}\to \mathbb{X}_{ev}$, $g\mapsto gx$, where  $g\in\mathbb{G}_{ev}(A)$, $A\in\mathsf{SAlg}_{\Bbbk}$. Furthermore, we have a commutative diagram
\[\begin{array}{ccc}
(\mathbb{G}/\mathbb{G}_x)_{ev} & \simeq & \mathbb{G}_{ev}/(\mathbb{G}_x)_{ev} \\
\downarrow & & \downarrow \\
\mathbb{X}_{ev} & = & \mathbb{X}_{ev}
\end{array},  \]
in which  $(\mathbb{G}_x)_{ev}=(\mathbb{G}_{ev})_x$, the left vertical arrow is $({\bar {\bf a}}_x)_{ev}$  and the right vertical arrow is unduced
by $({\bf a}_x)_{ev}$. It remains to note that an immersion $\mathbb{Y}\to\mathbb{Z}$ is a closed immersion if and only if the image of $Y^e$ is closed in $Z^e$, hence if and only if $Y_{ev}\to Z_{ev}$ is a closed immersion. The geometric counterpart of the above diagram is
\[\begin{array}{ccc}
(G/G_x)_{ev} & \simeq & G_{ev}/(G_x)_{ev} \\
\downarrow & & \downarrow \\
X_{ev} & = & X_{ev}
\end{array},  \]
where $(G_x)_{ev}=(G_{ev})_x$, the left vertical arrow is $({\bar a}_x)_{ev}$  and the right vertical arrow is unduced
by $(a_x)_{ev}$. Then \cite[II, \S 5, Poposition 3.3]{dg} concludes the proof.  	
\end{proof}
\begin{cor}\label{rem1}
In the conditions of the above theorem an orbit $\mathbb{G}x$ (respectively, $Gx$) is closed, whenever $\mathbb{G}_{ev}x$ (respectively, $G_{ev}x$) is, or equivalently, whenever $G_{res}x$ is. The latter takes place if $\dim(G_{res}x)=\mathrm{sdim}_0(Gx)$ is minimal. 	
\end{cor}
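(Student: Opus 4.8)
The plan is to read off both assertions from Theorem \ref{orbit map is an immersion} and its proof, reducing the super case to the classical purely even one; almost all the work is already done there, since $\overline{{\bf a}}_x\colon \mathbb{G}/\mathbb{G}_x\to\mathbb{X}$ is an immersion and that proof establishes that such an immersion is a \emph{closed} immersion exactly when its even part is.

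First I would make the equivalence explicit. Because $\overline{{\bf a}}_x$ is an immersion, $\mathbb{G}x$ is closed in $\mathbb{X}$ if and only if $(\overline{{\bf a}}_x)_{ev}$ is a closed immersion. By Proposition \ref{graded quotient} one has $(\mathbb{G}/\mathbb{G}_x)_{ev}\simeq \mathbb{G}_{ev}/(\mathbb{G}_x)_{ev}$, and since $(\mathbb{G}_x)_{ev}=(\mathbb{G}_{ev})_x$, the morphism $(\overline{{\bf a}}_x)_{ev}$ is precisely the orbit morphism for the action of $\mathbb{G}_{ev}$ on $\mathbb{X}_{ev}$ (here one uses $\mathbb{X}(\Bbbk)=\mathbb{X}_{ev}(\Bbbk)$ and the $\mathbb{G}_{ev}$-stability of $\mathbb{X}_{ev}$, exactly as in the commutative diagram at the end of the proof of Theorem \ref{orbit map is an immersion}). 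Hence $\mathbb{G}x$ is closed if and only if $\mathbb{G}_{ev}x$ is closed in $\mathbb{X}_{ev}$. As $\mathbb{X}_{ev}$ and $\mathbb{X}_{res}$ share the same underlying topological space, this is in turn equivalent to $G_{res}x$ being closed in $X_{res}$, which is the first assertion.

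For the sufficient condition I would first verify that the two numbers named in the statement agree: by Proposition \ref{graded quotient} together with the definition of super-dimension one has $\mathrm{sdim}_0(\mathbb{G}x)=\dim((\mathbb{G}x)_{ev})=\dim(\mathbb{G}_{ev}/(\mathbb{G}_{ev})_x)=\dim(G_{res}x)$. With this identification the claim that $\mathbb{G}x$ is closed once $\dim(G_{res}x)$ is minimal reduces to the classical orbit-closure theorem for the group scheme $G_{res}$ acting on the scheme $X_{res}$ of finite type, namely that an orbit of minimal dimension is closed (see \cite[II, \S 5, Proposition 3.2]{dg} or \cite[Proposition 7.6]{milne}). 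If a self-contained argument is preferred, the usual frontier reasoning applies: $G_{res}x$ is locally closed, so its frontier $\overline{G_{res}x}\setminus G_{res}x$ is a closed, $G_{res}$-stable subset of strictly smaller dimension, hence a union of orbits each of dimension strictly below $\dim(G_{res}x)$; minimality forces this frontier to be empty, so that $G_{res}x$ is closed.

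The only genuine subtlety, and the point I would be most careful about, is that passing to the even part commutes with forming the orbit in the strong sense that $(\overline{{\bf a}}_x)_{ev}$ is \emph{literally} the $\mathbb{G}_{ev}$-orbit morphism and not merely abstractly isomorphic to it. This is supplied by the functoriality of the endofunctor $(-)_{ev}$ together with Proposition \ref{graded quotient} and the diagram in the proof of Theorem \ref{orbit map is an immersion}. Once this compatibility is in place no superalgebraic computation remains, and the purely even input may be cited as a black box.
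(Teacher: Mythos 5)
Your proof is correct and follows essentially the same route as the paper: the corollary is read off from Theorem \ref{orbit map is an immersion}, whose proof already records that an immersion $\mathbb{Y}\to\mathbb{Z}$ is closed precisely when its even part is, and identifies $({\bar {\bf a}}_x)_{ev}$ with the $\mathbb{G}_{ev}$-orbit morphism via Proposition \ref{graded quotient} and the equality $(\mathbb{G}_x)_{ev}=(\mathbb{G}_{ev})_x$. The minimal-dimension criterion is, exactly as you say, the classical closed-orbit theorem for $G_{res}$ acting on $X_{res}$ (cf. \cite[II, \S 5, Proposition 3.2]{dg}, \cite[Proposition 7.6]{milne}), applied after the identification $\mathrm{sdim}_0(Gx)=\dim(G_{res}x)$.
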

\begin{cor}\label{rem2}
Theorem \ref{super-dimension of a quotient} implies that $\mathrm{sdim}(Gx)=\mathrm{sdim}(G)-\mathrm{sdim}(G_x)$.	
\end{cor}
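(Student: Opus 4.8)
The plan is to derive the formula directly from Theorem~\ref{super-dimension of a quotient} by identifying the orbit with the sheaf quotient $\mathbb{G}/\mathbb{G}_x$. Recall from Section~5 that the stabilizer $\mathbb{G}_x=\mathbf{a}_x^{-1}(\mathbb{Y})$ is a closed group super-subscheme of $\mathbb{G}$, and that the orbit $\mathbb{G}x$ is by definition the image of the embedding $\mathbb{G}/\mathbb{G}_x\to\mathbb{X}$. Since $\mathbb{G}$ is algebraic, this embedding realizes $\mathbb{G}x\simeq\mathbb{G}/\mathbb{G}_x$ as a superscheme of finite type over $\Bbbk$; Theorem~\ref{orbit map is an immersion} moreover shows that this copy of $\mathbb{G}/\mathbb{G}_x$ sits inside $\mathbb{X}$ as a locally closed super-subscheme. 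Thus $\mathbb{G}x$ and $\mathbb{G}/\mathbb{G}_x$ are isomorphic as superschemes, and the whole task reduces to transporting the known quotient formula across this isomorphism.

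First I would check that all three objects $\mathbb{G}$, $\mathbb{G}_x$ and $\mathbb{G}x$ have a well-defined super-dimension. As $\mathbb{G}$ is algebraic it is Noetherian with $\dim(G_{res})<\infty$; its closed group super-subscheme $\mathbb{G}_x$ is again of finite type, hence Noetherian with finite even dimension; and $\mathbb{G}x\simeq\mathbb{G}/\mathbb{G}_x$ is of finite type as noted above. So $\mathrm{sdim}$ is defined for each of them, and by Proposition~\ref{well defined} the value is independent of the chosen affine covering. Next I would record that $\mathrm{sdim}$ is an invariant of the isomorphism class of a superscheme: an isomorphism $\mathbb{G}x\simeq\mathbb{G}/\mathbb{G}_x$ carries an affine covering of one side to an affine covering of the other with isomorphic coordinate superalgebras, and $\mathrm{Ksdim}$ depends only on the superalgebra, whence $\mathrm{sdim}(\mathbb{G}x)=\mathrm{sdim}(\mathbb{G}/\mathbb{G}_x)$.

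With these preliminaries the conclusion is immediate: applying Theorem~\ref{super-dimension of a quotient} to the pair $(\mathbb{G},\mathbb{G}_x)$ gives $\mathrm{sdim}(\mathbb{G}/\mathbb{G}_x)=\mathrm{sdim}(\mathbb{G})-\mathrm{sdim}(\mathbb{G}_x)$, where the difference is taken componentwise in the even part $\mathrm{sdim}_0$ and the odd part $\mathrm{sdim}_1$, and substituting $\mathbb{G}x\simeq\mathbb{G}/\mathbb{G}_x$ yields $\mathrm{sdim}(\mathbb{G}x)=\mathrm{sdim}(\mathbb{G})-\mathrm{sdim}(\mathbb{G}_x)$. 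There is essentially no hard step here: the entire analytic content — the faithfully flat descent and the oddly regular fibre computation — is already packaged inside Theorem~\ref{super-dimension of a quotient}. The only point that genuinely rests on the earlier work is the scheme-theoretic (rather than merely functorial) identification $\mathbb{G}x\simeq\mathbb{G}/\mathbb{G}_x$, which is exactly what Theorem~\ref{orbit map is an immersion} supplies; everything else is formal bookkeeping with the definition of $\mathrm{sdim}$.
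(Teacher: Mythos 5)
Your proposal is correct and is essentially the paper's own (implicit) argument: the corollary is immediate from the identification $\mathbb{G}x\simeq\mathbb{G}/\mathbb{G}_x$, already recorded in Section~5 for algebraic $\mathbb{G}$ via the closedness of the stabilizer $\mathbb{G}_x$ and \cite[Theorem 14.1]{maszub3}, together with Theorem~\ref{super-dimension of a quotient} applied to the pair $(\mathbb{G},\mathbb{G}_x)$. The only cosmetic inaccuracy is attributing the superscheme identification $\mathbb{G}x\simeq\mathbb{G}/\mathbb{G}_x$ to Theorem~\ref{orbit map is an immersion}; that theorem supplies the stronger locally closed embedding into $\mathbb{X}$, which is not actually needed here since $\mathrm{sdim}$ is an invariant of the abstract isomorphism class.
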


\section{An Example} 

We still assume that $\Bbbk$ is algebraically closed.

Let $X$ be an affine superscheme and let $G$ be an affine algebraic group superscheme. Then (left) actions of $G$ on  $X$ are in one-to-one correspondence with superalgebra morphisms $\mathcal{O}(X)\to \mathcal{O}(G)\otimes\mathcal{O}(X)$ so that $\mathcal{O}(X)$ becomes a $\mathcal{O}(G)$-supercomodule.

Corollary \ref{rem1} infers that if $G_{ev}=1$, then any orbit is closed. Moreover, the underlying topological space of the orbit of a point $x$ is just $\{x\}$. Below we illustrate this fact by the following elementary example.

The condition $G_{ev}=1$ implies that $G$ is isomorphic to the direct product of several copies of the odd unipotent group superscheme $G_a^-$ of super-dimension $0|1$. For the sake of simplicity we assume that $G=G_a^-$. The Hopf superalgebra $\mathcal{O}(G)$
has a basis $1, z$, where $z$ is an odd primitive element. The corresponding supercomodule map $\tau:  \mathcal{O}(X)\to \mathcal{O}(G)\otimes\mathcal{O}(X)$ is defined as
\[
f\mapsto 1\otimes f +z\otimes \phi(f), \qquad f\in \mathcal{O}(X), 
\]
where $\phi:  \mathcal{O}(X)\to \mathcal{O}(X)$ is an odd (left) superderivation with $\phi^2=0$.

Choose $x\in\mathbb{X}(\Bbbk)$. For any superalgebra $A$ and arbitrary element $g\in \mathbb{G}(A)$ the element $gx$ is a
superalgebra morphism $\mathcal{O}(X)\to A$ such that
\[
(\star) \ (gx)(f)=x(f)+g(z)x(\phi(f)),\qquad  f\in \mathcal{O}(X).  
\]
Recall that $\ker x=\mathfrak{M}=\mathfrak{m}\oplus\mathcal{O}(X)_1$ is a maximal superideal in $\mathcal{O}(X)$  with $\mathcal{O}(X)_0/\mathfrak{m}=\Bbbk$. Set $I=\mathfrak{m}\oplus\phi^{-1}(\mathfrak{m})$.
\begin{lm}\label{all orbits are closed}	
The subspace $I$ is a superideal in $\mathcal{O}(X)$. Furthermore, the orbit $\mathbb{G}x$ is closed and isomorphic to $\mathrm{SSp}(\mathcal{O}(X)/I)$.	
\end{lm}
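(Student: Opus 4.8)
The plan is to bypass the fppf sheafification entirely and work with the comorphism $a_x^\ast\colon\mathcal{O}(X)\to\mathcal{O}(G)$ of the orbit morphism $a_x\colon G\to X$: once I identify $\ker(a_x^\ast)$ with $I$, both assertions of the lemma follow almost immediately. First I would record the comorphism explicitly. Since the action is encoded by $\tau(f)=1\otimes f+z\otimes\phi(f)$ and $a_x^\ast=(\mathrm{id}_{\mathcal{O}(G)}\otimes x)\circ\tau$, one obtains
\[
a_x^\ast(f)=x(f)+x(\phi(f))\,z,\qquad f\in\mathcal{O}(X),
\]
which is exactly the coordinate version of the formula $(\star)$.

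Next I would compute $\ker(a_x^\ast)$ by splitting $f=b+u$ into its even part $b\in\mathcal{O}(X)_0$ and odd part $u\in\mathcal{O}(X)_1$. Because $\Bbbk$ is purely even, $x$ kills $\mathcal{O}(X)_1$, and because $\phi$ is odd it interchanges parities; hence $x(f)=x(b)$ and $x(\phi(f))=x(\phi(u))$, so that $a_x^\ast(b+u)=x(b)+x(\phi(u))\,z$. As $1$ and $z$ are linearly independent in $\mathcal{O}(G)$, the element $b+u$ lies in $\ker(a_x^\ast)$ precisely when $x(b)=0$ and $x(\phi(u))=0$, i.e. $b\in\mathfrak{m}$ and $\phi(u)\in\mathfrak{m}$ (using $\ker(x|_{\mathcal{O}(X)_0})=\mathfrak{m}$). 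Thus $\ker(a_x^\ast)=\mathfrak{m}\oplus\phi^{-1}(\mathfrak{m})=I$. Being the kernel of a superalgebra morphism, $I$ is automatically a superideal, which settles the first assertion. (A direct verification is also short: it rests on $\phi$ being an odd superderivation together with $\mathcal{O}(X)_1^2\subseteq\mathfrak{m}$, the inclusion holding because $\mathcal{O}(X)_1\subseteq\mathfrak{M}$ while $\mathcal{O}(X)_1^2\subseteq\mathcal{O}(X)_0$.)

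It remains to identify the orbit. Since $G=G_a^-$ has $G_{ev}=1$, the even orbit $G_{ev}x=\{x\}$ is trivially closed, so Corollary \ref{rem1} (via Theorem \ref{orbit map is an immersion}) guarantees that $\mathbb{G}x$ is closed and that $\bar a_x\colon\mathbb{G}/\mathbb{G}_x\to\mathbb{X}$ is a closed immersion. In particular $\mathbb{G}/\mathbb{G}_x$ is a closed super-subscheme of the affine $\mathbb{X}$, hence affine, with $\mathcal{O}(\mathbb{G}/\mathbb{G}_x)\simeq\mathcal{O}(X)/\ker(\bar a_x^\ast)$. Factoring $a_x^\ast=\pi^\ast\circ\bar a_x^\ast$ through the quotient morphism $\pi\colon G\to G/G_x$, which is faithfully flat and hence has injective $\pi^\ast$, yields $\ker(a_x^\ast)=\ker(\bar a_x^\ast)$. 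Combined with the previous paragraph this gives $\mathbb{G}x\simeq\mathrm{SSp}(\mathcal{O}(X)/I)$.

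The step requiring the most care is this last identification: one must reconcile the functorial orbit $\mathbb{G}x=\mathbb{G}/\mathbb{G}_x$, defined as an fppf-sheaf quotient, with the concrete affine kernel $\ker(a_x^\ast)$ computed above. The bridge is precisely that $\bar a_x$ is a \emph{closed} immersion — so the orbit is cut out by the ideal $\ker(\bar a_x^\ast)$ — together with the injectivity of $\pi^\ast$, which forces $\ker(\bar a_x^\ast)=\ker(a_x^\ast)=I$. Everything else reduces to routine parity bookkeeping. I would also sanity-check the two degenerate cases separately: if $x\circ\phi$ vanishes on $\mathcal{O}(X)_1$ then $I=\mathfrak{M}$ and the orbit is the reduced point $\{x\}$, whereas if $x(\phi(u_0))\neq0$ for some odd $u_0$ then $\mathbb{G}_x=1$ and $\mathcal{O}(X)/I\simeq\mathcal{O}(G)$, consistent with $\mathbb{G}x\simeq\mathbb{G}$.
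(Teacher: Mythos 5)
Your proposal is correct, but it follows a genuinely different route from the paper's own proof. You identify $I$ with $\ker(a_x^\ast)$, which makes the superideal assertion automatic, and you then deduce both closedness and the isomorphism $\mathbb{G}x\simeq\mathrm{SSp}(\mathcal{O}(X)/I)$ from the heavy results: Theorem \ref{orbit map is an immersion} and Corollary \ref{rem1} give that $\bar{a}_x$ is a closed immersion (using $G_{ev}=1$, so $G_{ev}x=\{x\}$ is closed), and faithful flatness of the quotient morphism $\mathbb{G}\to\mathbb{G}/\mathbb{G}_x$ from \cite[Theorem 14.1]{maszub3} makes $\pi^\ast$ injective, whence $\ker(\bar{a}_x^\ast)=\ker(a_x^\ast)=I$; all of these steps check out. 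The paper instead argues in an entirely self-contained way: it verifies by hand that $I$ is a superideal, uses $(\star)$ to see that $\mathbb{G}x$ lies inside the closed, $\mathbb{G}$-stable subfunctor $\mathbb{Y}=\mathrm{SSp}(\mathcal{O}(X)/I)$, and then shows that every $\psi\in\mathbb{Y}(A)$ is already of the form $gx$ with $g(z)=\psi(v)$ (after the case split on whether $\phi^{-1}(\mathfrak{m})=\mathcal{O}(X)_1$), so that the \emph{naive} orbit functor $A\mapsto\mathbb{G}(A)x$ coincides with $\mathbb{Y}$. The trade-off is this: your argument is shorter and more conceptual --- the description $I=\ker(a_x^\ast)$ explains where $I$ comes from and would identify the coordinate superalgebra of any closed orbit of an affine group superscheme acting on an affine superscheme --- but it leans on the very machinery this section is meant to illustrate by elementary means (the paper introduces the example with ``Below we illustrate this fact''), so while there is no circularity (the corollary is established beforehand), the illustrative purpose is lost. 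The paper's computation, by contrast, proves something slightly stronger in this example: the orbit is closed at the level of the naive quotient, with no fppf sheafification or descent needed, which is exactly the kind of concrete confirmation of Corollary \ref{rem1} the authors were after.
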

\begin{proof}
All we need is to prove that $\mathcal{O}(X)_1\mathfrak{m}\subseteq\phi^{-1}(\mathfrak{m})$ and $\mathcal{O}(X)_0\phi^{-1}(\mathfrak{m})\subseteq\phi^{-1}(\mathfrak{m})$. We have
\[\phi(\mathcal{O}(X)_1\mathfrak{m})\subseteq \mathcal{O}(X)_0\mathfrak{m}+\mathcal{O}(X)_1^2\subseteq \mathfrak{m}.  \]
The proof of the second statement is similar. The equation $(\star)$ implies $(gx)(I)=0$. In other words, $\mathbb{G}x$ is a subfunctor
of the closed super-subscheme $\mathbb{Y}$ of $\mathbb{X}$, which is isomorphic to $\mathrm{SSp}(\mathcal{O}(X)/I)$. Since $\tau(I)\subseteq \mathcal{O}(G)\otimes I$, $\mathbb{Y}$ is $\mathbb{G}$-stable and $x\in\mathbb{Y}(\Bbbk)$.

If $\phi^{-1}(\mathfrak{m})=\mathcal{O}(X)_1$, then $\mathbb{G}_x=\mathbb{G}$ and $\mathbb{Y}=\mathbb{G}x=\{x\}$. Otherwise, $\mathcal{O}(X)_1/\phi^{-1}(\mathfrak{m})$ is one dimensional. Let $v$ be a basis vector of $\mathcal{O}(X)_1/\phi^{-1}(\mathfrak{m})$ such that $\phi$ takes $v$ to $1\in\Bbbk=\mathcal{O}(X)_0/\mathfrak{m}$. Then for any $\psi\in \mathbb{Y}(A)$ there is $\psi=gx$, where
$g(z)=\psi(v)$. Lemma is proven.
\end{proof}

\end{document}